\newtheorem{Theorem}{Theorem}[section]
\newtheorem{Lemma}[Theorem]{Lemma}
\newtheorem{Corollary}[Theorem]{Corollary}
\newtheorem{Open Problem}[Theorem]{Open Problem}
\newcommand{\mysection}[1]{\section{#1}\setcounter{equation}{0}}
\begin{document}

\title{Existence and asymptotics of normalized solutions  for periodic quasilinear Schr\"{o}dinger system  }

\author{Jianqing Chen\ and Qian Zhang\thanks{Corresponding author:\ qzhang\_fjnu@163.com  (Q. Zhang)}\\
\small  \  College of Mathematics and Informatics \& FJKLMAA, Fujian Normal University, \\
\small  Qishan Campus, Fuzhou 350117, P. R. China }

\date{}

\maketitle

\noindent {\bf Abstract}:  This paper is concerned with a quasilinear Schr\"{o}dinger system. By minimization under a convenient constraint and concentration-compactness lemma, we prove the existence of ground states solution.  Our result covers the case of $\alpha+\beta\in(2,4)$ which seems to be the first result for coupled quasilinear Schr\"{o}dinger system in the periodic situation.

 \medskip

\noindent {\bf Keywords:} Quasilinear Schr\"{o}dinger system; Ground states solution; Periodic potential

\medskip
\noindent {\bf Mathematics Subject Classification}:  35J05, 35J20, 35J60

\mysection {Introduction}
Let $N\geq 3$, $u:=u(x),v:=v(x)$ be real valued functions on $\mathbb R^{N}$. In this paper, we  consider the following  quasilinear Schr\"{o}dinger system
\begin{equation}\label{eq11}
\left\{
     \begin{array}{ll}
\aligned &-\Delta u+A(x)u-\frac{1}{2}\triangle(u^{2})u=\frac{2\alpha}{\alpha+\beta}|u|^{\alpha-2}u|v|^{\beta},\\
&-\Delta v+B(x)v-\frac{1}{2}\triangle(v^{2})v=\frac{2\beta}{\alpha+\beta}|u|^{\alpha}|v|^{\beta-2}v,\\
& u(x)\to 0\ \hbox{and}\quad v(x)\to 0\ \hbox{as}\ |x|\to \infty,\endaligned
     \end{array}
   \right.
\end{equation}
where $\alpha,\beta>1$ and $2<\alpha+\beta<\frac{4N}{N-2}$.  In recent years, much attention has been devoted to the quasilinear Schr\"{o}dinger equation of the form
\begin{equation}\label{eq12}
-\Delta u+V(x)u-ku\Delta(u^{2})=|u|^{p-2}u,\ x\in \mathbb R^{N}.
\end{equation}
Such a problem is related  to the existence of standing waves of the following quasilinear Schr\"{o}dinger equation
\begin{equation}\label{eq13}
i\partial_{t}z=-\Delta z+V(x)z-l(|z|^{2})z-k\Delta g(|z|^{2})g'(|z|^{2})z,\ x\in \mathbb R^{N},
\end{equation}
where $V$ is a given potential, $k\in \mathbb R$, $l$ and $g$ are real functions.  (\ref{eq13}) has been used as models in several areas of physics corresponding to various types of $g$. The superfluid film equation in plasma physics has this structure for $g(s) = s $ \cite{k}.  In the case $g(s) = (1 + s)^{\frac{1}{2}}$, Eq. (\ref{eq13}) models the self-channeling of a high-power ultra short laser in matter \cite{r}. Eq. (\ref{eq13}) also appears in fluid mechanics \cite{k, lss}, in the theory of Heidelberg ferromagnetism and magnus \cite{ltz}, in dissipative quantum mechanics and in condensed matter theory \cite{mf}.  When considering the case $g(s) = s$ and $ k > 0$, one obtains a corresponding equation of elliptic type like (\ref{eq12}). For more detailed  mathematical and physical interpretation of equations like (\ref{eq12}), we refer to \cite{bmmlb,bl,cs,psw,0s} and the references therein.

Problem  (\ref{eq12})  has been studied extensively recently. In \cite{lw,lw2}, with the help of a constrained minimization argument, the existence of positive ground states solution  was proved. Furthermore, Liu and Wang  \cite{lw2}  proved that Eq. (\ref{eq12}) has a ground states solution by using a change of variables and treating the new problem in an Orlicz space when $4\leq p<\frac{4N}{N-2}$ and the potential $V(x)\in C(\mathbb R^{N}, \mathbb R)$ satisfies
$$(V)\inf\limits_{x\in\mathbb R^{N}}V(x)\geq a>0,\ \forall\ \tilde{M}>0,\hbox{meas}\{x\in \mathbb R^{N}\ | \ V(x)\leq\tilde{ M}\}<+\infty.$$
Such kind of hypotheses was firstly introduced by Bartsch and Wang \cite{bw} to guarantee the compactness of embedding  of
$E:=\left\{u\in H^{1}(\mathbb R^{N})\ | \ \int_{\mathbb R^{N}}V(x)u^{2}<\infty\right\}\hookrightarrow L^{s}(\mathbb R^{N}),$
when $ 2\leq s<2^*.$ In \cite{lww},  by utilizing the Nehari method, Liu, Wang and Wang established the existence of  ground state solutions to (\ref{eq12}). In addition, their argument does not depend on any change of variables. But again $4\leq p<\frac{4N}{N-2}$  is assumed. Very recently, in \cite{rs}, Ruiz and Siciliano showed Eq. (\ref{eq12}) has a ground states solution for $N\geq3,$ $2<p<\frac{4N}{N-2}$ via Nehari-Poho\v{z}aev type constraint and concentration-compactness Lemma, moreover, the potential $V (x)$ satisfies the following conditions:
$$(V_{1})\ V\in C^{1}(\mathbb R^{N},\mathbb R^{+}), 0<V_{0}\leq V(x)\leq V_{\infty}=\lim\limits_{|x|\rightarrow\infty}V(x)<+\infty;\ \ \ \  \ $$
$$(V_{2})\ \nabla V(x)\cdot x\in L^{\infty}(\mathbb R^{N});\ \ \ \ \ \  \ \ \ \ \ \ \ \ \  \ \ \ \ \ \ \ \ \ \  \ \ \ \  \ \ \ \  \ \ \ \  \ \  \ \ \ \  \ \ \ \ \ \ \ \ \ \ \ \ \ $$
$$(V_{3})\ \hbox{the map}\ t\mapsto t^{\frac{N+2}{N+p}}V(t^{\frac{1}{N+p}}x)\  \hbox{ is concave for any}\ x\in \mathbb R^{N}.\ \ \ \ \  \ \ \   \ \ \ \ $$
Wu and Wu \cite{ww} obtained the existence of radial solutions for (\ref{eq12}) by using change of variables when $2<p<\frac{4N}{N-2}$ and the potential $V (x)$ satisfies   the similar assumptions as those in Ruiz and Siciliano \cite{rs}.  Based on mountain pass theorem and the Truding-Moser inequality, Moameni \cite{ma} obtained the  existence of nontrivial solutions for (\ref{eq12}) without limit for $|x|\rightarrow\infty$ on the $V(x)$.

For the study of elliptic system, when $k = 0$, there are also several papers concerned with the existence of ground state solutions. In \cite{ll}, under the assumptions:
$$(a_{1})\ 0<\bar{V}_{j}(x),\mu_{j}(x)\ \hbox{are}\ \bar{\tau}_{i}-\hbox{periodic in}\ x_{i}, \bar{\tau}_{i}> 0, 1\leq i\leq N, j=1, 2;$$
$$(a_{2})\ 0<\bar{\beta}(x) \ \hbox{is}\ \bar{\tau}_{i}-\hbox{periodic in}\ x_{i}, \bar{\tau}_{i}> 0, 1\leq i\leq N,\ \ \ \ \ \ \ \ \  \ \ \ \ \  \ \ \ \ \  \ \ $$
 Liu and Liu studied  the following  system
$$
\left\{
     \begin{array}{ll}
\aligned &-\Delta u_{1}+V_{1}(x)u_{1}=\hat{\mu}_{1}(x)u_{1}^{3}+\tilde{\beta}(x)u_{1}u_{2}^{2},\\
&-\Delta u_{2}+V_{2}(x)u_{2}=\hat{\mu}_{2}(x)u_{2}^{3}+\tilde{\beta}(x)u_{1}^{2}u_{2},\\
&\ u_{1}, \ u_2\in H^{1}(\mathbb R^{N}). \endaligned
     \end{array}
   \right.$$
They obtained the existence of ground states solution with the help of variational method. Based on the method of Nehari manifold and the concentration-compactness principle, Zhang, Xu and Zhang \cite{zxz} proved the existence of positive ground states solutions for the nonlinear Schr\"{o}dinger system
$$
\left\{
     \begin{array}{ll}
\aligned &-\Delta u+( \tilde{a}(x)+1)u =F_{u}(u,v)+\tilde{\lambda} u,\\
&-\Delta v+( \tilde{b}(x)+1 ) v =F_{v}(u,v)+\tilde{\lambda} v,\endaligned
     \end{array}
   \right.$$
where $\tilde{a}(x),\tilde{b}(x)$ satisfies
$$(a_{3})\ \tilde{a},\tilde{b}\in L^{\infty}(\mathbb R^{N}),\ \ \inf_{\mathbb R^{N}}\{\tilde{a}(x)+1\}>\tilde{\lambda},\ \inf_{\mathbb R^{N}}\{\tilde{b}(x)+1\}>\tilde{\lambda}; \ \  \ \ \ \ \ \ \  \ \ \ \ \ $$
$$(a_{4})\ \tilde{a}(x)=\tilde{a}(x+y),\ \tilde{ b}(x)=\tilde{b}(x+y),\ \forall\ x\in\mathbb R^{N},\ y\in \mathbb{Z}^N.\ \ \ \ \ \ \  \ \ \  \ \ \ \  \  $$
When $k\neq 0$, Guo and Tang \cite{gt} studied the following quasilinear Schr\"{o}dinger system
$$
\left\{
     \begin{array}{ll}
\aligned &-\Delta u+(\hat{\lambda} \hat{a}(x)+1)u-\frac{1}{2}\triangle(u^{2})u=\frac{2\alpha}{\alpha+\beta}|u|^{\alpha-2}u|v|^{\beta},\\
&-\Delta v+(\hat{\mu }\hat{b}(x)+1)v-\frac{1}{2}\triangle(v^{2})v=\frac{2\beta}{\alpha+\beta}|u|^{\alpha}|v|^{\beta-2}v,\endaligned
     \end{array}
   \right.$$
and proved the existence of positive ground state solution by using concentration-compactness Lemma.  Their argument depends on change of variables and again $4\leq\alpha+\beta<\frac{4N}{N-2}$ is assumed. Moreover, the potential $\hat{a}(x),\hat{b}(x)$ satisfies
$$(a_{5})\ 0\leq \hat{a}(x), \hat{b}(x)\in C(\mathbb R^{N},\mathbb R),\ \Omega:=\hbox{int}\{\hat{a}^{-1}(0)\}=\hbox{int}\{\hat{b}^{-1}(0)\}\ \ \hbox{is} \ \ \ \ \ $$
$$ \hbox{nonempty with smooth boundary and}\ \bar{ \Omega}= \hat{a}^{-1}(0) = \hat{b}^{-1}(0);$$
$$(a_{6})\ \hbox{there exist}\ M_{1},M_{2}>0 \ \ \hbox{such that}\ \ \ \ \ \ \ \  \ \ \ \ \ \ \  \ \ \ \  \ \ \ \ \ \ \ \ \ \ \ \ \ \ \ \ \ \  \ \ \ \ \ \ $$
$$\ \ \ \ meas(\{x\in \mathbb R^{N}\ | \ \hat{a}(x)\leq M_{1}\})<\infty,$$
$$\ \ \ \ meas(\{x\in \mathbb R^{N}\ | \ \hat{b}(x)\leq M_{2}\})<\infty,$$
where $meas$ denotes the Lebesgue measure in $\mathbb R^{N}$. We emphasize that for the single equation  considered in \cite{lw2,lww} the results hold for $|u|^{p-2}u, p\in[4,\frac{4N}{N-2})$;   and for the system in \cite{gt},  the ground states solution is obtained only for $ \alpha+\beta\in[4,\frac{4N}{N-2})$. See also \cite{acm,m,s,su} for related results.  An interesting question is that whether or not we can obtain existence results for (\ref{eq11}) without limit for $|x|\rightarrow\infty$ on the $A(x)$ and $B(x)$? Further, when $A(x)$ and $B(x)$ are periodic potentials, whether  we can find the existence of ground states solutions for (\ref{eq11})?

To the best of our knowledge, there is no results on the existence of positive ground states solutions for periodic  quasilinear Schr\"{o}dinger system  (\ref{eq11}) with $2<\alpha+\beta<\frac{4N}{N-2}$  and thus to prove the existence of positive ground states solutions to problem (\ref{eq11}) is the first purpose of the present paper. Since the strategy in \cite{gt} is only for $\alpha+\beta\in[4,\frac{4N}{N-2})$, we want to use a unified approach to study   (\ref{eq11}) with $\alpha+\beta\in(2,\frac{4N}{N-2})$ and this is the second purpose of the present paper.
\vskip4pt
Before state our main results, we  assume $A,B\in C^{1}(\mathbb R^{N},\mathbb R^{+})$  and satisfy
\vskip4pt
\noindent$(A_{1})$  $A(x)$ is $\tau_{i}$-periodic in $x_{i}$, $\tau_{i}>0$, $1\leq i\leq N,$ $A(x)\geq A_{0}>0;$\\
$(A_{2})$ $  \nabla A(x)\cdot x\in L^{\infty}(\mathbb R^{N}),(\alpha+\beta-2)A(x)-\nabla A(x)\cdot x\geq0$;\\
$(A_{3})$ the map $s\mapsto s^{\frac{N+2}{N+\alpha+\beta}}A(s^{\frac{1}{N+\alpha+\beta}}x)$ is concave for any $x\in \mathbb R^{N}$;\\
$(B_{1})$ $B(x)$ is $\tau_{i}$-periodic in $x_{i}$, $\tau_{i}>0$, $1\leq i\leq N,$   $B(x)\geq B_{0}>0$;\\
$(B_{2})$ $  \nabla B(x)\cdot x\in L^{\infty}(\mathbb R^{N}),(\alpha+\beta-2)B(x)-\nabla B(x)\cdot x\geq0$;\\
$(B_{3})$ the map $s\mapsto s^{\frac{N+2}{N+\alpha+\beta}}B(s^{\frac{1}{N+\alpha+\beta}}x)$ is concave for any $x\in \mathbb R^{N}$.
\vskip4pt

It is worth noting that the assumptions $(A_{1})$ and $(B_{1})$ have been used by many authors in dealing with a single semilinear Schr\"{o}dinger equation and in \cite{ll} to study semilinear elliptic system. The similar hypotheses on $A(x)$ and $B(x)$ as above $(A_{2})$, $(A_{3})$, $(B_{2})$, $(B_{3})$  once appeared in \cite{rs,ww} to study  the single quasilinear Schr\"{o}dinger equation.
 Our main result  reads as follows.

\begin{Theorem}\label{th11}
 Under the assumptions  $(A_{1})$-$(A_{3})$, $(B_{1})$-$(B_{3})$. If$\alpha$, $\beta>1,\alpha+\beta\in(2,\frac{4N}{N-2})$, then problem (\ref{eq11}) has a positive ground states solution.
\end{Theorem}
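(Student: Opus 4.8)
Throughout write $p:=\alpha+\beta\in(2,\tfrac{4N}{N-2})$ and associate to (\ref{eq11}) the energy
$$J(u,v)=\tfrac12\int_{\R^N}(|\nabla u|^2+|\nabla v|^2)+\tfrac12\int_{\R^N}(u^2|\nabla u|^2+v^2|\nabla v|^2)+\tfrac12\int_{\R^N}(Au^2+Bv^2)-\tfrac{2}{p}\int_{\R^N}|u|^\alpha|v|^\beta .$$
The quasilinear term is finite only when $u^2,v^2\in D^{1,2}(\R^N)$, so I would work on $E:=\{u\in H^1(\R^N):\nabla(u^2)\in L^2(\R^N)\}$, on which $J$ is continuous and Gâteaux–differentiable along bounded directions; since $u^2\in D^{1,2}$ forces $u\in L^{4N/(N-2)}$ one has $E\hookrightarrow L^s(\R^N)$ for every $s\in[2,\tfrac{4N}{N-2}]$, which is precisely the integrability needed to control $\int|u|^\alpha|v|^\beta$ since $p<\tfrac{4N}{N-2}$. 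By $(A_1),(B_1)$ the potentials are bounded, $A_0\le A\le A_1$, $B_0\le B\le B_1$, so $\int(|\nabla u|^2+Au^2)$ is equivalent to the $H^1$–norm.

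The constraint I would use comes from the scaling $(u_s,v_s)(x):=s^{1/(N+p)}(u,v)(s^{-1/(N+p)}x)$, $s>0$, for which a direct computation gives
$$J(u_s,v_s)=\tfrac{s^{N/(N+p)}}{2}\int(|\nabla u|^2+|\nabla v|^2)+\tfrac{s^{(N+2)/(N+p)}}{2}\int(u^2|\nabla u|^2+v^2|\nabla v|^2)+\tfrac12\int\big(\varphi_A(s,x)u^2+\varphi_B(s,x)v^2\big)-\tfrac{2s}{p}\int|u|^\alpha|v|^\beta ,$$
where $\varphi_A(s,x)=s^{(N+2)/(N+p)}A(s^{1/(N+p)}x)$. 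Because $\tfrac{N}{N+p}<1$ and $\tfrac{N+2}{N+p}<1$ (the latter since $p>2$), and because $\varphi_A,\varphi_B$ are concave in $s$ by $(A_3),(B_3)$, the fibre map $s\mapsto J(u_s,v_s)$ is positive near $0$, tends to $-\infty$ as $s\to\infty$ whenever $\int|u|^\alpha|v|^\beta>0$, and has strictly decreasing derivative; hence it has a unique critical point $s(u,v)$, which is its strict maximum and depends smoothly on $(u,v)$. I would then set
$$\mathcal M:=\big\{(u,v)\in E\times E:\ u\not\equiv0,\ v\not\equiv0,\ \textstyle\int|u|^\alpha|v|^\beta>0,\ \tfrac{d}{ds}J(u_s,v_s)\big|_{s=1}=0\big\}\neq\emptyset ,$$
which is exactly the Nehari–Pohozaev manifold $\{\langle J'(u,v),(u,v)\rangle+P(u,v)=0\}$, $P$ the Pohozaev functional of (\ref{eq11}). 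Substituting the constraint into $J$ and using $(A_2),(B_2)$ gives, for $(u,v)\in\mathcal M$,
$$J(u,v)=\tfrac{p}{2(N+p)}\int(|\nabla u|^2+|\nabla v|^2)+\tfrac{p-2}{2(N+p)}\int(u^2|\nabla u|^2+v^2|\nabla v|^2)+\tfrac{1}{2(N+p)}\int\big(W_A u^2+W_B v^2\big),$$
with $W_A:=(p-2)A-\nabla A\cdot x\ge0$, $W_B:=(p-2)B-\nabla B\cdot x\ge0$ — a sum of nonnegative terms. Hence $c:=\inf_{\mathcal M}J$ is well defined, $\mathcal M$ is a natural $C^1$ constraint (the fibre map being nondegenerate at its maximum), and $J|_{\mathcal M}$ is coercive.

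To produce a minimizer I would take a minimizing sequence and, via Ekeland's variational principle together with the fibre structure, pass to a bounded Palais–Smale sequence $(u_n,v_n)$ for $J$ with $J(u_n,v_n)\to c$. First $c>0$: otherwise the last display forces $u_n,v_n\to0$ in $E$, contradicting $\int|u_n|^\alpha|v_n|^\beta>0$ and the constraint. By Lions' lemma vanishing is impossible (it would kill $\int|u_n|^\alpha|v_n|^\beta$ and hence, by the constraint, everything), so there are $y_n\in\R^N$ and $\delta>0$ with $\int_{B_1(y_n)}(u_n^2+v_n^2)\ge\delta$; rounding $y_n$ to the period lattice we may assume $A(\cdot+y_n)=A$, $B(\cdot+y_n)=B$, so by $(A_1),(B_1)$ the sequence $(u_n,v_n)(\cdot+y_n)$ is again minimizing, and along a subsequence it converges weakly in $H^1$ to some $(u_0,v_0)\not\equiv0$, with the squares converging weakly in $D^{1,2}$ and everything a.e. A Brezis–Lieb decomposition for $\int|u|^\alpha|v|^\beta$ and for $\int u^2|\nabla u|^2$, together with weak lower semicontinuity of the gradient terms and Fatou applied to the nonnegative densities $W_A u^2,W_B v^2$, then shows that after rescaling $(u_0,v_0)$ by the unique $s_0$ bringing it onto $\mathcal M$ one has $J\le\liminf J(u_n,v_n)=c$; thus $(u_0,v_0)$ (up to this rescaling) minimizes $J|_{\mathcal M}$ and the convergence is strong. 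By the Lagrange multiplier rule the minimizer satisfies $J'(u_0,v_0)=\theta\big(J'(u_0,v_0)+P\big)'(u_0,v_0)$; combining the induced Nehari and Pohozaev identities (here $(A_2),(B_2)$ ensure the relevant coefficient is nonzero) forces $\theta=0$, so $(u_0,v_0)$ solves (\ref{eq11}). Replacing $(u_0,v_0)$ by $(|u_0|,|v_0|)$ — which changes neither $J$ nor $\mathcal M$, since $|\nabla|u||=|\nabla u|$ and $|u|^2|\nabla|u||^2=u^2|\nabla u|^2$ a.e. — we may assume $u_0,v_0\ge0$; writing the first equation as $-\operatorname{div}((1+u_0^2)\nabla u_0)+|\nabla u_0|^2u_0+Au_0=\tfrac{2\alpha}{p}|u_0|^{\alpha-2}u_0|v_0|^\beta\ge0$ and likewise for $v_0$, the strong maximum principle gives $u_0,v_0>0$. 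Finally, every nontrivial solution of (\ref{eq11}) obeys both the Nehari and the Pohozaev identities, hence lies on $\mathcal M$ and has energy $\ge c$; so $(u_0,v_0)$ is a positive ground state.

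The main obstacle is this last compactness step. Because $\int u^2|\nabla u|^2$ and $\int|u|^\alpha|v|^\beta$ are not weakly continuous, passing from the weak limit to a genuine minimizer requires careful Brezis–Lieb bookkeeping and a check that no energy escapes under the translations used to recenter — which is possible only because periodicity makes $J$ and $\mathcal M$ translation invariant, removing the need for a strict subadditivity inequality. Two further delicate points are: showing that the recentered weak limit has both components nontrivial (so that $\int|u_0|^\alpha|v_0|^\beta>0$ and the rescaling onto $\mathcal M$ is legitimate), which must be extracted from the coupling rather than the potentials; and verifying that $\mathcal M$ is a genuine natural constraint for the whole range $p\in(2,\tfrac{4N}{N-2})$ — in particular that the Lagrange multiplier vanishes — which is exactly what the sign and concavity conditions $(A_2),(A_3),(B_2),(B_3)$ are designed to deliver, since they let the single scaling $s\mapsto(u_s,v_s)$ encode the Nehari and Pohozaev constraints at once.
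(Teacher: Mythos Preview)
Your overall architecture matches the paper's: you set up the same Nehari--Poho\v{z}aev manifold $\mathcal M$ via the same one-parameter scaling (your parameter $s$ is the paper's $t^{N+p}$), invoke $(A_3),(B_3)$ for concavity of the fibre map, derive the same nonnegative expression for $J|_{\mathcal M}$ from $(A_2),(B_2)$, and recover compactness by translating modulo the period lattice and applying Lions' lemma. The paper's compactness step (its Lemma~\ref{le34}) is organized slightly differently --- rather than projecting the weak limit back onto $\mathcal M$ and comparing, it rules out $G(u,v)<0$ and $G(u,v)>0$ separately by direct energy comparison using Fatou and Br\'ezis--Lieb --- but your sketch could be completed along similar lines.

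The genuine gap is the step from ``minimizer on $\mathcal M$'' to ``critical point of $J$.'' You invoke the Lagrange multiplier rule and then argue $\theta=0$. But here $H=\{u\in H^1:\nabla(u^2)\in L^2\}$ is only a complete metric space, not a Banach space; $J$ and $G$ are merely G\^ateaux differentiable along $C_0^\infty$ directions; and $G$ itself contains the quasilinear terms $\int u^2|\nabla u|^2$, so $\mathcal M$ is not a $C^1$ submanifold in any standard sense and the Lagrange multiplier rule is not available. Nor does your Ekeland step, by itself, produce a usable Palais--Smale sequence for the unconstrained functional in this non-smooth setting. The paper (following \cite{rs} and \cite{lww}) bypasses this entirely with a deformation argument: assuming the minimizer $(\tilde u,\tilde v)$ is not a weak solution, it finds $(\phi_1,\phi_2)\in C_0^\infty\times C_0^\infty$ with $\langle J'(\tilde u,\tilde v),(\phi_1,\phi_2)\rangle<-1$, perturbs the fibre curve $t\mapsto(\tilde u_t,\tilde v_t)$ by a localized bump $\varepsilon\zeta(t)(\phi_1,\phi_2)$, and uses continuity of $t\mapsto G(\gamma_1(t),\gamma_2(t))$ together with the sign change of $G$ across $t=1$ to locate a new point on $\mathcal M$ with energy strictly below $m$. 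That argument needs only G\^ateaux differentiability and the metric-space structure, which is precisely why it is used here; your proof should replace the Lagrange-multiplier paragraph with this device.
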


\begin{Corollary}\label{co12}
If $B(x)$ is a positive constant, one can still obtain the same results as  Theorem \ref{th11} for system (\ref{eq11}).
\end{Corollary}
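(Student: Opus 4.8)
The plan is to recognize that Corollary \ref{co12} is not an independent existence statement but an immediate specialization of Theorem \ref{th11}: once one checks that a positive constant $B(x)\equiv B_{0}>0$ satisfies each of the hypotheses $(B_{1})$, $(B_{2})$, $(B_{3})$, Theorem \ref{th11} applies without change and produces a positive ground states solution of \eqref{eq11}. So the entire task reduces to verifying these three conditions for a constant $B$, which is routine, and no new analytic machinery is required.

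For $(B_{1})$ and $(B_{2})$ nothing substantial is needed. A constant function is $\tau_{i}$-periodic in $x_{i}$ for every choice of periods $\tau_{i}>0$ (in particular for those already fixed for $A$ in $(A_{1})$), and $B(x)=B_{0}>0$ supplies the required lower bound, so $(B_{1})$ holds. Since $B$ is constant we have $\nabla B\equiv 0$, hence $\nabla B(x)\cdot x\equiv 0\in L^{\infty}(\mathbb{R}^{N})$, and moreover
\[
(\alpha+\beta-2)B(x)-\nabla B(x)\cdot x=(\alpha+\beta-2)B_{0}\geq 0
\]
because $\alpha+\beta>2$ and $B_{0}>0$; thus $(B_{2})$ is satisfied as well.

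The only point deserving a moment's attention is $(B_{3})$. When $B\equiv B_{0}$, the map in $(B_{3})$ reduces to the single power function $s\mapsto B_{0}\,s^{\gamma}$ on $(0,\infty)$ with $\gamma:=\frac{N+2}{N+\alpha+\beta}$. The standing assumption $\alpha+\beta>2$ gives $N+\alpha+\beta>N+2$, so $0<\gamma<1$, and a power $s^{\gamma}$ with exponent in $(0,1)$ is concave; hence $(B_{3})$ holds. With $(B_{1})$--$(B_{3})$ verified for the constant potential, Theorem \ref{th11} applies verbatim and yields the claimed result. There is no genuine obstacle in the argument; the only thing worth emphasizing is that it is precisely the inequality $\alpha+\beta>2$---the same condition that governs the whole paper---that forces the exponent $\gamma$ into $(0,1)$ and thereby secures the concavity required by $(B_{3})$.
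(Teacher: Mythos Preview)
Your proposal is correct and matches the paper's (implicit) approach: the paper states Corollary~\ref{co12} without proof, treating it as an immediate consequence of Theorem~\ref{th11}, and your verification that a positive constant $B\equiv B_{0}$ satisfies $(B_{1})$--$(B_{3})$ is exactly the routine check that justifies this. Your observation that $\alpha+\beta>2$ is precisely what forces the exponent $\gamma=\frac{N+2}{N+\alpha+\beta}$ into $(0,1)$, securing the concavity in $(B_{3})$, is the one point worth recording.
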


\noindent {\bf Remark 1.3.}  Our result can be looked on as a complement to results in \cite{gt,lw2,lww}. Moreover, compared with the  hypotheses  in  \cite{rs,ww}, we do not assume that $B(x)$ has a limit for $|x|\rightarrow\infty$. Theorem \ref{th11} also extends the main result in \cite{gt,lw2,lww,ma,rs,ww,zxz} to the quasilinear Schr\"{o}dinger system.
\vskip4pt

The rest of the paper is organized as follows. In Section 2, we state the variational framework of our problem and establish some preliminary results. Theorem \ref{th11} is proved in Section 3.

\mysection {Preliminaries}
Set $X :=\ H\times H$
with  $H := \{ u \in H^{1}(\mathbb R^{N})\   | \ \int_{\mathbb R^{N}}u^{2}|\nabla u|^{2}<+\infty  \},$
where $H^{1}(\mathbb R^{N})$ is the usual Sobolev space. It is well known that $H$ is a complete metric space which is endowed with the distance
$$ d_{H}( u, \omega  ):=\|u-\omega\|_{H^{1}}+|\nabla u^{2}-\nabla \omega^{2}|_{L^{2}}.  $$
We define $$\aligned d_{X}((u,v),(\omega,\nu))
&:=\|u-\omega\|_{H^{1}}+|\nabla u^{2}-\nabla \omega^{2}|_{L^{2}}+\|v-\nu\|_{H^{1}}\\
&\ \ \ \ +|\nabla v^{2}-\nabla \nu^{2}|_{L^{2}}. \endaligned$$
A function $(u,v)\in X$ is called a weak solution of (\ref{eq11}), if for all $\varphi_{1},\varphi_{2}\in C_{0}^{\infty}(\mathbb R^{N})$, it holds
$$\int_{\mathbb R^{N}}\left((1+u^{2})\nabla u\nabla \varphi_{1} +(u|\nabla u|^{2}+A(x)u   -\frac{2\alpha}{\alpha+\beta}|u|^{\alpha-2}u|v|^{\beta})\varphi_{1}\right)=0$$
and
$$\int_{\mathbb R^{N}}\left((1+v^{2})\nabla v\nabla \varphi_{2}+ (v|\nabla v|^{2}+B(x)v
 -\frac{2\beta}{\alpha+\beta}|u|^{\alpha} |v|^{\beta-2}v )\varphi_{2}\right)=0.$$
Hence there is a one-to-one correspondence between solutions of (\ref{eq11}) and critical points of the following  functional $I: X \rightarrow\mathbb R$ defined by
\begin{equation}\label{eq21}
\aligned I(u,v) &=\int_{\mathbb R^{N}}\bigg(\frac{1}{2}(|\nabla u|^{2}+|\nabla v|^{2}+A(x)u^{2}+B(x)v^{2}+u^{2}|\nabla u|^{2}+v^{2}|\nabla v|^{2})\\
&\ \ \ \ -\frac{2}{\alpha+\beta} |u|^\alpha |v|^\beta \bigg). \endaligned
\end{equation}
It is easy to check that the functional $I$ is continuous on $X.$ In addition, for any $(\varphi_{1},\varphi_{2})\in C_{0}^{\infty}(\mathbb R^{N})$, $(u,v)\in X,$ and $(u,v)+(\varphi_{1},\varphi_{2})\in X$, we can compute the Gateaux derivative
$$\aligned &\ \langle I'(u,v),(\varphi_{1},\varphi_{2})\rangle
= \int_{\mathbb R^{N}}\bigg( (1+u^{2})\nabla u\nabla \varphi_1 +(1+v^{2})\nabla v\nabla \varphi_{2}+u|\nabla u|^2\varphi_{1}\\
&\ +v|\nabla v|^{2}\varphi_{2}+A(x)u\varphi_{1}+B(x)v\varphi_2 -\frac{2\alpha|u|^{\alpha-2}u|v|^{\beta}\varphi_{1}}{\alpha+\beta}
  -\frac{2\beta|v|^{\beta-2}v|u|^{\alpha}\varphi_{2}}{\alpha+\beta}\bigg).\endaligned$$
Then, $(u,v)\in X $ is a solution of (\ref{eq11}) if and only if
$\langle I'(u,v),(\varphi_{1},\varphi_{2})\rangle=0.$

For convenience, throughout this paper, $C$ and $ C_{i}(i=1,2,\ldots)$ denote (possibly different) positive constants, $\int_{\mathbb R^{N}}g$ denotes the integral $\int_{\mathbb R^{N}}g(z)dz$ and $\|(u,v)\|_{H}:=\left(\int_{\mathbb R^{N}}(|\nabla u|^{2}+|\nabla v|^{2}+u^{2}+v^{2})\right)^{1/2}$.
\vskip4pt
\begin{Lemma}\label{le21} Suppose $(A_{1})$-$(A_{3})$ and $(B_{1})$-$(B_{3})$ are satisfied, if $(u,v)\in X$ is a weak solution of problem (\ref{eq11}), then $(u,v)$ satisfies $P(u,v)=0,$ where
$$ \aligned P(u,v):&=\frac{N-2}{2}\int_{\mathbb R^{N}}|\nabla u|^{2}+\frac{N-2}{2}\int_{\mathbb R^{N}}|\nabla v|^{2}+\frac{N}{2}\int_{\mathbb R^{N}}A(x)u^{2}\\
&\ \ \ \  +\frac{N}{2}\int_{\mathbb R^{N}}B(x)v^{2}+\frac{1}{2}\int_{\mathbb R^{N}}\nabla A(x)\cdot xu^{2}+\frac{1}{2}\int_{\mathbb R^{N}}\nabla B(x)\cdot xv^{2}\\
&\ \ \ \ +\frac{N-2}{2}\int_{\mathbb R^{N}}u^{2}|\nabla u|^{2}+\frac{N-2}{2}\int_{\mathbb R^{N}}v^{2}|\nabla v|^{2}-\frac{2N}{\alpha+\beta}\int_{\mathbb R^{N}}|u|^{\alpha}|v|^{\beta}.\endaligned $$
\end{Lemma}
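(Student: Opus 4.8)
The plan is to obtain $P(u,v)=0$ as a Pohozaev-type identity, by testing the two equations of (\ref{eq11}) against the dilation fields $x\cdot\nabla u$ and $x\cdot\nabla v$ respectively and adding the results. For the first equation I would insert $\varphi_{1}=x\cdot\nabla u$ into the weak formulation. Using $\partial_{i}(x\cdot\nabla u)=\partial_{i}u+x\cdot\nabla(\partial_{i}u)$ and one integration by parts, the principal part contributes
\[
\int_{\mathbb R^{N}}(1+u^{2})\nabla u\cdot\nabla(x\cdot\nabla u)=\frac{2-N}{2}\int_{\mathbb R^{N}}(1+u^{2})|\nabla u|^{2}-\int_{\mathbb R^{N}}u\,(x\cdot\nabla u)\,|\nabla u|^{2},
\]
and the crucial point is that the last term here is \emph{exactly} cancelled by the lower-order quasilinear term $\int_{\mathbb R^{N}}u|\nabla u|^{2}(x\cdot\nabla u)$ that also appears in the weak formulation. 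This cancellation is what forces $\int u^{2}|\nabla u|^{2}$ to enter $P$ with the same scaling weight $\frac{N-2}{2}$ as $\int|\nabla u|^{2}$. For the potential term one computes $\int_{\mathbb R^{N}}A(x)u\,(x\cdot\nabla u)=-\frac{N}{2}\int_{\mathbb R^{N}}A(x)u^{2}-\frac12\int_{\mathbb R^{N}}(\nabla A\cdot x)u^{2}$, and for the coupling term, using $|u|^{\alpha-2}u\,\partial_{i}u=\frac1\alpha\partial_{i}(|u|^{\alpha})$, one gets $\frac{2\alpha}{\alpha+\beta}\int_{\mathbb R^{N}}|u|^{\alpha-2}u|v|^{\beta}(x\cdot\nabla u)=\frac{2}{\alpha+\beta}\int_{\mathbb R^{N}}|v|^{\beta}\,x\cdot\nabla(|u|^{\alpha})$.

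Performing the symmetric computation on the second equation with $\varphi_{2}=x\cdot\nabla v$ and adding, the two coupling contributions combine, after one further integration by parts, into $\frac{2}{\alpha+\beta}\int_{\mathbb R^{N}}x\cdot\nabla(|u|^{\alpha}|v|^{\beta})=-\frac{2N}{\alpha+\beta}\int_{\mathbb R^{N}}|u|^{\alpha}|v|^{\beta}$. Collecting every term and multiplying the resulting identity by $-1$ reproduces the claimed expression for $P(u,v)$, so $P(u,v)=0$.

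The main obstacle is not the algebra but the legitimacy of the test functions: a weak solution $(u,v)\in X$ is not a priori regular enough for $x\cdot\nabla u$ and $x\cdot\nabla v$ to lie in $X$, so each integration by parts must be justified. I would handle this in two stages. First a regularity step: by a standard argument (after the dual change of variables the equations for $u$ and $v$ become semilinear with locally H\"older continuous right-hand sides, so elliptic regularity yields $u,v\in C^{2}_{\mathrm{loc}}$, in particular $u,v\in W^{2,2}_{\mathrm{loc}}$), which makes all the pointwise identities above meaningful. Second a truncation step: replace $x\cdot\nabla u$ by $\zeta_{R}(x)(x\cdot\nabla u)$ with $\zeta_{R}(x)=\zeta(x/R)$ a standard cutoff, carry out the now fully justified integrations by parts, and let $R\to\infty$; the extra terms involving $\nabla\zeta_{R}$ vanish in the limit since $u,v\in H^{1}(\mathbb R^{N})$ and $u^{2}|\nabla u|^{2},\,v^{2}|\nabla v|^{2}\in L^{1}(\mathbb R^{N})$, while the terms $\int_{\mathbb R^{N}}(\nabla A\cdot x)u^{2}$ and $\int_{\mathbb R^{N}}(\nabla B\cdot x)v^{2}$ are absolutely convergent by $(A_{2})$ and $(B_{2})$. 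Once this passage to the limit is in place, $P(u,v)=0$ follows exactly as in the formal computation; I expect the bulk of the work to be in these two justification steps.
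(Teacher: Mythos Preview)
Your proposal is correct and is precisely the standard Pohozaev argument the paper has in mind: the authors' own proof reads in full ``The proof is standard, so we omit it here.'' Your formal computation reproduces $P(u,v)=0$ exactly, and the regularity-plus-cutoff justification you outline (elliptic regularity to get $W^{2,2}_{\mathrm{loc}}$, then $\zeta_R(x)(x\cdot\nabla u)$ as test function and $R\to\infty$ using $(A_2)$, $(B_2)$ and the integrability coming from $X$) is the right way to make the argument rigorous.
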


\begin{proof}   The proof is standard, so we omit it here.
\end{proof}
\vskip4pt
Next, for any $u\in H$, we define $u_{t}:\mathbb R^{+}\rightarrow H$ by
$$ u_{t}(x):=tu(t^{-1}x).$$
Let $t\in \mathbb R^{+}$ and $(u,v)\in X$, we have that
$$\aligned I(u_{t},v_{t})&=\frac{t^{N}}{2}\int_{\mathbb R^{N}}|\nabla u|^{2}+\frac{t^{N}}{2}\int_{\mathbb R^{N}}|\nabla v|^{2}+\frac{t^{N+2}}{2}\int_{\mathbb R^{N}}A(tx)u^{2}\\
&\ \ \ \ +\frac{t^{N+2}}{2}\int_{\mathbb R^{N}}B(tx)v^{2}+\frac{t^{N+2}}{2}\int_{\mathbb R^{N}}u^{2}|\nabla u|^{2}\\
&\ \ \ \ +\frac{t^{N+2}}{2}\int_{\mathbb R^{N}}v^{2}|\nabla v|^{2}-\frac{2t^{N+\alpha+\beta}}{\alpha+\beta}\int_{\mathbb R^{N}}|u|^{\alpha}|v|^{\beta}.\endaligned$$
Denote $h_{uv}(t):=I(u_{t},v_{t})$. Since $\alpha+\beta>2,$ we see that $h_{uv}(t)>0$ for $t>0$ small enough and $h_{uv}(t)\rightarrow -\infty$ as $t\rightarrow +\infty,$ this implies that $h_{uv}(t)$ attains its maximum. Moreover, thanks to $(A_{2}), (B_{2})$, $h_{uv}(t): \mathbb R^{+}\rightarrow \mathbb R$ is $C^{1}$ and
$$\aligned h'_{uv}(t)
&=\frac{N}{2}t^{N-1}\int_{\mathbb R^{N}}(|\nabla u|^{2}+|\nabla v|^{2})+\frac{N+2}{2}t^{N+1}\int_{\mathbb R^{N}} (A(tx)u^{2}\\
&\ \ \ \ +B(tx)v^{2}+u^{2}|\nabla u|^{2}+v^{2}|\nabla v|^{2} )\\
&\ \ \ \ +\frac{ t^{N+1}}{2}\int_{\mathbb R^{N}} (\nabla A(tx)\cdot txu^{2}+\nabla B(tx)\cdot txv^{2} )\\
&\ \ \ \ -\frac{2(N+\alpha+\beta)}{\alpha+\beta}t^{N+\alpha+\beta-1}\int_{\mathbb R^{N}}|u|^{\alpha}|v|^{\beta}.\endaligned $$
\vskip4pt
Motivated by \cite{rs}, we introduce the following set
$$ \mathcal{M}:=\{(u,v)\in X\backslash\{(0,0)\}\ | \ G(u,v)=0\},$$
where $G:X\rightarrow \mathbb R$ is defined as
$$ \aligned G(u,v)&:=\frac{N}{2} \int_{\mathbb R^{N}}(|\nabla u|^{2}+|\nabla v|^{2})+\frac{N+2}{2} \int_{\mathbb R^{N}}(A(x)u^{2}+B(x)v^{2})\\
&\ \ \ \ +\frac{N+2}{2} \int_{\mathbb R^{N}}(u^{2}|\nabla u|^{2}+v^{2}|\nabla v|^{2})+\frac{1}{2}\int_{\mathbb R^{N}}\nabla A(x)\cdot xu^{2}\\
&\ \ \ \ +\frac{1}{2}\int_{\mathbb R^{N}}\nabla B(x)\cdot xv^{2}-\frac{2(N+\alpha+\beta)}{\alpha+\beta}\int_{\mathbb R^{N}}|u|^{\alpha}|v|^{\beta}. \endaligned $$
Note that if $(u,v)\in X$ is a weak solution of problem (\ref{eq11}), then $\langle I'(u,v),(u,v)\rangle=0$ and $P(u,v)=0$, therefore, $G(u,v)=0.$

Define
\begin{equation}\label{eq22}
m:=\inf\limits_{(u,v)\in \mathcal{M}}I(u,v).
\end{equation}
Then our aim is to prove that $m$ is achieved. In the rest of this section, we will give some properties of the set $\mathcal{M}$ and show that (\ref{eq22}) is well defined.
\vskip4pt
\begin{Lemma}\label{le22} Suppose that 
     $(u,v)\in X\backslash\{(0,0)\}$, then there is a unique 
     $\bar{t}:=t(u,v)>0$ such that $h_{uv}$ attains its maximum 
     at $\bar{t}$ and
      $m=\inf\limits_{(u,v)\in X}\max\limits_{t>0}I(u_{t},v_{t}).$ Moreover,
       if $G(u,v)<0$, then $\bar{t}\in(0,1)$.
\end{Lemma}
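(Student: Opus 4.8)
The plan is to study the scalar function $h_{uv}(t)=I(u_t,v_t)$ for fixed $(u,v)\in X\setminus\{(0,0)\}$ and extract all three assertions from its shape. Write
$$
h_{uv}(t)=\tfrac12 a t^N+\tfrac12\psi(t)t^{N+2}+\tfrac12 b\, t^{N+2}-\tfrac{2}{\alpha+\beta}c\, t^{N+\alpha+\beta},
$$
where $a=\int(|\nabla u|^2+|\nabla v|^2)>0$, $b=\int(u^2|\nabla u|^2+v^2|\nabla v|^2)\geq 0$, $c=\int|u|^\alpha|v|^\beta\geq 0$, and $\psi(t)=\int\big(A(tx)u^2+B(tx)v^2\big)$. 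First I would record the degenerate cases: if $c=0$ then $h_{uv}$ is strictly increasing (all surviving terms are nonnegative and the $a t^N/2$ term is strictly increasing), so it has no interior maximum; hence on $\mathcal M$ we must have $c>0$, and I will assume $c>0$ from here on. As already observed in the text, $h_{uv}(t)>0$ for small $t>0$ and $h_{uv}(t)\to-\infty$ as $t\to\infty$, so a maximum is attained at some $\bar t>0$ with $h_{uv}'(\bar t)=0$.

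The crux is \emph{uniqueness} of the critical point, and this is where $(A_2)$, $(A_3)$, $(B_2)$, $(B_3)$ enter. From the formula for $h_{uv}'(t)$ given in the excerpt, a critical point satisfies $t^{-(N-1)}h_{uv}'(t)=0$, i.e.
$$
\tfrac{N}{2}a+\tfrac{N+2}{2}b\, t^{2}+\tfrac12\,\phi(t)\, t^{2}
=\tfrac{2(N+\alpha+\beta)}{\alpha+\beta}c\, t^{\alpha+\beta},
$$
where $\phi(t)t^2:=(N+2)t^2\psi(t)+t^2\int(\nabla A(tx)\cdot tx\,u^2+\nabla B(tx)\cdot tx\,v^2)=\frac{d}{dt}\big(t^{N+2}\psi(t)\big)\big/ t^{N+1}$ collects the potential terms. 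The standard device (this is exactly the role of hypotheses $(A_3)$, $(B_3)$ in \cite{rs,ww}) is the substitution making the potential contribution concave: the map $s\mapsto s^{\frac{N+2}{N+\alpha+\beta}}A(s^{\frac{1}{N+\alpha+\beta}}x)$ being concave means that, after the change of variable $t^{N+\alpha+\beta}=s$, the function $s\mapsto s^{-(N+\alpha+\beta)}\cdot\big(t^{N+2}\psi(t)\big)$ — equivalently $t^{-(\alpha+\beta-2)}\psi(t)$ — is nonincreasing in $t$; together with $(A_2)$, $(B_2)$ guaranteeing $\phi\geq 0$ (so $t^{N+2}\psi(t)$ is nondecreasing), one checks that
$$
t\longmapsto \frac{ \tfrac{N}{2}a+\tfrac{N+2}{2}b\, t^{2}+\tfrac12\phi(t)t^{2} }{\,c\, t^{\alpha+\beta}\,}
$$
is strictly decreasing on $(0,\infty)$ when $c>0$ (the numerator grows no faster than $t^2\cdot t^{\alpha+\beta-2}=t^{\alpha+\beta}$ in the potential part, is constant in the $a$-part and $\sim t^2$ in the $b$-part, all divided by the strictly faster-growing $t^{\alpha+\beta}$ since $\alpha+\beta>2$), so the equation above has at most one root. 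Hence $\bar t$ is unique; since $h_{uv}'>0$ on $(0,\bar t)$ and $h_{uv}'<0$ on $(\bar t,\infty)$, the maximum of $h_{uv}$ is attained at $\bar t$ and nowhere else. I expect verifying this monotonicity carefully — matching the abstract concavity condition $(A_3)$/$(B_3)$ with the exponents appearing in $h_{uv}'$ — to be the main technical obstacle.

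For the variational characterization, note that the map $t\mapsto(u_t,v_t)$ crosses $\mathcal M$ exactly when $G(u_t,v_t)=0$; but a direct computation shows $G(u_t,v_t)=t\,h_{uv}'(t)$ (the displayed formula for $G$ evaluated at $(u_t,v_t)$ reproduces, after the scaling, precisely $t^{N}$ times the bracket above), so $(u_{\bar t},v_{\bar t})\in\mathcal M$ and $\bar t$ is the \emph{unique} positive dilation parameter with this property. Therefore
$$
\inf_{(u,v)\in X\setminus\{(0,0)\}}\ \max_{t>0} I(u_t,v_t)
=\inf_{(u,v)\in X\setminus\{(0,0)\}} I(u_{\bar t},v_{\bar t})
=\inf_{(w,z)\in\mathcal M} I(w,z)=m,
$$
the middle equality because every element of $\mathcal M$ is its own $\bar t=1$ representative and conversely every ray hits $\mathcal M$ once at its maximum. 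Finally, suppose $G(u,v)<0$, i.e. $h_{uv}'(1)=G(u,v)<0$; since $h_{uv}'>0$ near $0$ and $h_{uv}'$ changes sign only at $\bar t$, we must have $1>\bar t$, that is $\bar t\in(0,1)$. This completes all three claims.
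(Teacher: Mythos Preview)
Your overall plan is sound, and for two of the three claims it is actually cleaner than what the paper does: the identity $G(u_t,v_t)=t\,h_{uv}'(t)$ (hence $h_{uv}'(1)=G(u,v)$) makes both the minimax description $m=\inf_X\max_t I(u_t,v_t)$ and the implication $G(u,v)<0\Rightarrow h_{uv}'(1)<0\Rightarrow\bar t<1$ one-line consequences of uniqueness. The paper instead proves $\bar t<1$ by subtracting the scaled constraint identity from the unscaled one and reading off the sign, which is longer. Your remark on the degenerate case $c=\int|u|^\alpha|v|^\beta=0$ is also a point the paper glosses over; it does not affect the infimum since then $\sup_t I(u_t,v_t)=+\infty$.

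Where your write-up needs repair is precisely the step you flag as the obstacle: the monotonicity of the potential contribution. You assert that $(A_3)$, $(B_3)$ make $t\mapsto t^{-(\alpha+\beta-2)}\psi(t)$ nonincreasing, but the quantity that must be shown nonincreasing is $t\mapsto \phi(t)\,t^{2-\alpha-\beta}$, and these differ (also, $\phi(t)t^2=\frac{d}{dt}\big(t^{N+2}\psi(t)\big)/t^{N-1}$, not $/t^{N+1}$, and $(A_2)$ alone does \emph{not} force $\phi\ge 0$ in the full range of $\alpha+\beta$). The fix is to set $F(s):=t^{N+2}\psi(t)$ with $s=t^{N+\alpha+\beta}$, note that $(A_3)$, $(B_3)$ say exactly that $F$ is concave in $s$, and compute
\[
\phi(t)\,t^{2-\alpha-\beta}=(N+\alpha+\beta)\,F'(s),
\]
which is nonincreasing by concavity. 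This is, of course, just the differentiated form of the paper's argument: the paper substitutes $s=t^{N+\alpha+\beta}$ \emph{in $h_{uv}$ itself} and observes that every summand becomes a concave function of $s$ (the potential pieces by $(A_3)$, $(B_3)$, the pure powers because their exponents lie in $(0,1)$, the coupling term being linear), with strict concavity coming from the $a\,s^{N/(N+\alpha+\beta)}$ term since $a>0$. Uniqueness of the maximizer is then immediate from strict concavity, with no ratio analysis needed. Either route works; the paper's is shorter because $(A_3)$, $(B_3)$ are tailor-made for it.
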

\begin{proof}
As mentioned early, $h_{uv}(t)$ attains its maximum. Let us make the change of variable  $s=t^{N+\alpha+\beta}$, thus
$$\aligned h_{uv}(s)&=\frac{s^{\frac{N}{N+\alpha+\beta}}}{2}\int_{\mathbb R^{N}}|\nabla u|^{2}+\frac{s^{\frac{N}{N+\alpha+\beta}}}{2}\int_{\mathbb R^{N}}|\nabla v|^{2}+\frac{s^{\frac{N+2}{N+\alpha+\beta}}}{2}\int_{\mathbb R^{N}}u^{2}|\nabla u|^{2}\\
&\ \ \ \ +\frac{s^{\frac{N+2}{N+\alpha+\beta}}}{2}\int_{\mathbb R^{N}}v^{2}|\nabla v|^{2}+\frac{s^{\frac{N+2}{N+\alpha+\beta}}}{2}\int_{\mathbb R^{N}}A(s^{\frac{1}{N+\alpha+\beta}}x)u^{2}\\
&\ \ \ \ +\frac{s^{\frac{N+2}{N+\alpha+\beta}}}{2}\int_{\mathbb R^{N}}B(s^{\frac{1}{N+\alpha+\beta}}x)v^{2}-\frac{2s }{\alpha+\beta}\int_{\mathbb R^{N}}|u|^{\alpha}|v|^{\beta}.\endaligned $$
By  $(A_{3})$ and $(B_{3})$, $h_{uv}(s)$ is a concave function. We already know that it attains its maximum, let $\bar{t}$ be the unique point at which this maximum is achieved. We claim that $\bar{t}$ is the unique critical point of $h_{uv}$. Indeed, if $h_{uv}$ has another maximum point which is different from $\bar{t}$. Let $h_{uv}(\bar{s})\geq h_{uv}(\bar{t})$, because $h_{uv}$ is a strictly concave function, it has for any $\lambda\in (0,1)$, we have
$ h_{uv}(\lambda\bar{t}+(1-\lambda)\bar{s})>\lambda h_{uv}(\bar{t})+(1-\lambda) h_{uv}(\bar{s})\geq h_{uv}(\bar{t}),$
so for any $\delta>0$, as long as $\lambda$ is sufficiently close to 1, there is always $t=\lambda\bar{t}+(1-\lambda)\bar{s}$. But $h_{uv}(t)>h_{uv}(\bar{t})$, which is in contradiction with $\bar{t}$ being the maximum point of $h_{uv}$. Hence, $\bar{t}$ is the unique critical point of $h_{uv}$. Then $\bar{t}$ is the unique critical point of $h_{uv}$ and $h_{uv}$ is positive and increasing for $0<t <\bar{t}$ and decreasing for $t>\bar{t}$. In particular, for any $u,v\neq0$, $\bar{t}\in \mathbb R$ is the unique value such that $(u_{\bar{t}},u_{\bar{t}})$ belongs to $\mathcal{M},$ and $I(u_{\bar{t}},v_{\bar{t}})$ reaches a global maximum for $t=\bar{t}$. Moreover, we claim that $\bar{t}\in(0,1)$. Indeed, if $G(u,v)<0$, it follows from
$$ \aligned G(u,v)&:=\frac{N}{2} \int_{\mathbb R^{N}}(|\nabla u|^{2}+|\nabla v|^{2})+\frac{N+2}{2} \int_{\mathbb R^{N}}(A(x)u^{2}+B(x)v^{2}+u^{2}|\nabla u|^{2}\\
&\ \ \ \ +v^{2}|\nabla v|^{2})+\frac{1}{2}\int_{\mathbb R^{N}}(\nabla A(x)\cdot xu^{2}+\nabla B(x)\cdot xv^{2})\\
&\ \ \ \ -\frac{2(N+\alpha+\beta)}{\alpha+\beta}\int_{\mathbb R^{N}}|u|^{\alpha}|v|^{\beta}<0,\endaligned $$
and
$$\aligned &\frac{N}{2}\bar{t}^{N}\int_{\mathbb R^{N}}(|\nabla u|^{2}+|\nabla v|^{2})+\frac{N+2}{2}\bar{t}_{1}^{N+2}\int_{\mathbb R^{N}}(A(\bar{t}x)u^{2}+B(\bar{t}x)v^{2}+u^{2}|\nabla u|^{2}\\
&  +v^{2}|\nabla v|^{2})+\frac{\bar{t}^{N+2}}{2}\int_{\mathbb R^{N}}((\nabla A(\bar{t}x)\cdot \bar{t}x)u^{2}+(\nabla B(\bar{t}x)\cdot \bar{t}_{1}x)v^{2})\\
& -\frac{2(N+\alpha+\beta)}{\alpha+\beta}\bar{t}^{N+\alpha+\beta }\int_{\mathbb R^{N}}|u|^{\alpha}|v|^{\beta}=0,\endaligned $$
that
$$\aligned &\frac{N}{2}\left(\bar{t}^{N+\alpha+\beta }-\bar{t}^{N}\right)\int_{\mathbb R^{N}}(|\nabla u|^{2}+|\nabla v|^{2})+\frac{N+2}{2}\left(\bar{t}^{N+\alpha+\beta }-\bar{t}_{1}^{N+2}\right)\int_{\mathbb R^{N}}(A(\bar{t}x)u^{2}\\
&+B(\bar{t}x)v^{2}+u^{2}|\nabla u|^{2}+v^{2}|\nabla v|^{2})+\frac{\bar{t}^{N+\alpha+\beta}-\bar{t}^{N+2}}{2} \int_{\mathbb R^{N}}((\nabla A(\bar{t}x)\cdot \bar{t}x)u^{2}\\
&+(\nabla B(\bar{t}x)\cdot \bar{t} x)v^{2})=0,\endaligned $$
which implies that $\bar{t}<1$. This finishes the proof.
\end{proof}
It follows from Lemma \ref{le22}  that $\mathcal{M}\neq\emptyset $ and (\ref{eq22}) is well defined.
\vskip4pt
\begin{Lemma}\label{le23}
 $m>0$.
\end{Lemma}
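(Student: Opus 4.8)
The plan is to show that on the constraint set $\mathcal{M}$ the functional $I$ is bounded below by a positive constant, which by definition of $m$ in \eqref{eq22} gives $m>0$. The starting point is the observation that for $(u,v)\in\mathcal{M}$ we have $G(u,v)=0$, and we may combine this identity with the expression \eqref{eq21} for $I$ to rewrite $I(u,v)$ in a manifestly nonnegative-plus-controlled form. Concretely, using $G(u,v)=0$ to eliminate the quartic terms $\int u^2|\nabla u|^2+v^2|\nabla v|^2$ and the nonlinear term $\int|u|^\alpha|v|^\beta$ in favour of the quadratic gradient and potential terms (and exploiting $(A_2)$, $(B_2)$, which guarantee $(\alpha+\beta-2)A(x)-\nabla A(x)\cdot x\ge 0$ and likewise for $B$, so the potential contributions carry the right sign), one obtains an identity of the shape
$$
I(u,v)=c_1\int_{\mathbb R^N}(|\nabla u|^2+|\nabla v|^2)+c_2\int_{\mathbb R^N}(A(x)u^2+B(x)v^2)+\text{(nonnegative terms)},
$$
with $c_1,c_2>0$ depending only on $N,\alpha,\beta$. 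I would carry out this algebra carefully, checking the sign of every coefficient; this is the one place where the structural hypotheses $(A_2),(B_2)$ are essential.

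Next I would derive a lower bound on the size of any element of $\mathcal{M}$. From $G(u,v)=0$, again using $(A_2),(B_2)$ to drop the $\nabla A\cdot x$, $\nabla B\cdot x$ terms to the favourable side, we get
$$
\tfrac{N}{2}\|(u,v)\|^2_{D^{1,2}}\ \le\ \tfrac{2(N+\alpha+\beta)}{\alpha+\beta}\int_{\mathbb R^N}|u|^\alpha|v|^\beta .
$$
The right-hand side is estimated by Young's inequality $|u|^\alpha|v|^\beta\le \tfrac{\alpha}{\alpha+\beta}|u|^{\alpha+\beta}+\tfrac{\beta}{\alpha+\beta}|v|^{\alpha+\beta}$ followed by the Gagliardo–Nirenberg–Sobolev inequality. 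Here one must distinguish the two regimes $2<\alpha+\beta<2^*$ and $2^*\le\alpha+\beta<\frac{4N}{N-2}$: in the first case $\int|u|^{\alpha+\beta}\le C\|u\|_{H^1}^{\alpha+\beta}$ directly, while in the second case one uses the quartic control, namely $u^2\in H^1$ so that $|u|^{\alpha+\beta}=(u^2)^{(\alpha+\beta)/2}$ with $(\alpha+\beta)/2<2^*$, giving $\int|u|^{\alpha+\beta}\le C\,(\int|\nabla(u^2)|^2)^{\theta}\cdots = C\,(\int u^2|\nabla u|^2)^{\theta}\cdots$. In either regime the right-hand side is controlled by a superquadratic power of the norms appearing (with positive coefficient) on the left of the displayed inequality and in the nonnegative terms of the $I$-identity. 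Since $(u,v)\neq(0,0)$, this forces a uniform lower bound
$$
\int_{\mathbb R^N}(|\nabla u|^2+|\nabla v|^2)+\int_{\mathbb R^N}(A_0 u^2+B_0 v^2)+\int_{\mathbb R^N}(u^2|\nabla u|^2+v^2|\nabla v|^2)\ \ge\ \rho>0
$$
for all $(u,v)\in\mathcal{M}$, with $\rho$ independent of $(u,v)$.

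Combining the two steps: the $I$-identity from the first paragraph bounds $I(u,v)$ below by a fixed positive multiple of the quantity shown to be $\ge\rho$, hence $I(u,v)\ge c\,\rho>0$ for every $(u,v)\in\mathcal{M}$, and therefore $m=\inf_{\mathcal M}I\ge c\rho>0$. I expect the main obstacle to be the second step — getting the superquadratic (strictly faster than quadratic) growth of the nonlinear term under control uniformly in the low-exponent range $\alpha+\beta\in(2,2^*]$, where one cannot rely on the quartic term and must instead argue purely in $H^1$ via Gagliardo–Nirenberg, while still keeping all constants coupled correctly to the coefficients produced by the $G(u,v)=0$ substitution. The bookkeeping of which terms are ``nonnegative'' (using $(A_2),(B_2)$) versus which must be absorbed is the delicate part; everything else is a direct computation.
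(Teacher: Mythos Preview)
Your overall strategy---use $G(u,v)=0$ to rewrite $I$ as a sum of nonnegative pieces, then show the constraint set $\mathcal{M}$ stays away from the origin via Sobolev/interpolation---is the same skeleton the paper uses. The difference is that the paper first passes to the \emph{constant-potential} comparison functional $\bar I$ (with $A\equiv A_0$, $B\equiv B_0$) and the associated set $\bar{\mathcal M}$, proving $\bar m\le m$ and then $\bar m>0$. You instead propose to work directly on $\mathcal{M}$ with the variable potentials, and this is where a genuine gap appears.

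The issue is your Step~2 in the low-exponent regime $2<\alpha+\beta<2^*$. From $G(u,v)=0$ the coefficient multiplying $\int u^{2}$ is $\tfrac12\bigl[(N+2)A(x)+\nabla A(x)\cdot x\bigr]$. You invoke $(A_2)$ to ``drop'' the $\nabla A\cdot x$ terms to the favourable side, but $(A_2)$ only gives the \emph{upper} bound $\nabla A\cdot x\le(\alpha+\beta-2)A$; it says nothing about a lower bound. From the concavity hypothesis $(A_3)$ one can extract $(N+2)A+\nabla A\cdot x\ge 0$, which lets you drop the $L^{2}$ term and obtain your displayed inequality $\tfrac{N}{2}\|(u,v)\|_{D^{1,2}}^{2}\le C\int|u|^{\alpha}|v|^{\beta}$---but then the left-hand side no longer controls $\|u\|_{L^{2}}$. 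In the range $\alpha+\beta<2^{*}$ your Gagliardo--Nirenberg bound $\int|u|^{\alpha+\beta}\le C\|u\|_{H^{1}}^{\alpha+\beta}$ \emph{requires} the $L^{2}$ norm on the right, so the inequality $\|\nabla u\|_{L^{2}}^{2}\lesssim\|u\|_{H^{1}}^{\alpha+\beta}$ does not force any lower bound: large $\|u\|_{L^{2}}$ with small $\|\nabla u\|_{L^{2}}$ is not excluded. The paper's reduction to constant potentials is precisely what fixes this: on $\bar{\mathcal M}$ the $L^{2}$ coefficient is exactly $\tfrac{N+2}{2}A_{0}>0$, so after interpolating $\|u\|_{L^{\alpha+\beta}}$ between $L^{2}$ and $L^{4N/(N-2)}$ and applying Young, the $L^{2}$ piece is absorbed and one is left with $\int u^{2}|\nabla u|^{2}\le C\bigl(\int u^{2}|\nabla u|^{2}\bigr)^{N/(N-2)}$, giving the uniform lower bound on the quartic term. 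Your high-exponent argument ($\alpha+\beta\ge 2^{*}$, using $u^{2}\in H^{1}$) is fine; it is the low-exponent case where the bookkeeping you flagged as ``delicate'' actually breaks without the constant-potential reduction.
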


\begin{proof}  Let us define
$$\aligned \bar{I}(u,v)&=\int_{\mathbb R^{N}}\bigg(\frac{1}{2}(|\nabla u|^{2}+|\nabla v|^{2}+ A_{0}u^{2}+B_{0}v^{2}\\
&\ \ \ \ +u^{2}|\nabla u|^{2}+v^{2}|\nabla v|^{2}) -\frac{2}{\alpha+\beta} |u|^{\alpha}|v|^{\beta}\bigg),\endaligned$$
where $A_{0},B_{0}$ come from $(A_{1}),(B_{1})$. Obviously, $\bar{I}(u,v)\leq I(u,v)$, which implies that for any $u,v\neq0$,
$$\bar{m}:=\inf_{(u,v)\in X}\max_{t>0}\bar{I}(u_{t},v_{t})\leq \inf_{(u,v)\in X}\max_{t>0}I(u_{t},v_{t})=m.$$
Then, it suffices to prove that $\bar{m}>0.$ Define
$$ \bar{\mathcal{M}}=\{(u,v)\in X\backslash\{(0,0)\} \ | \ g'_{uv}(1)=0\},$$
where $g_{uv}(t)=\bar{I}(u_{t},v_{t}).$ By Lemma \ref{le22} applied to $A\equiv A_{0},B\equiv B_{0}$, we get that
$$\bar{m }=\inf_{(u,v)\in\bar{\mathcal{M}}}\bar{I}(u,v).$$
For every $(u,v)\in\bar{\mathcal{ M}}$, by using H\"{o}lder, Young and interpolation inequalities,
$$\aligned  &\ \frac{N+2}{2}\int_{\mathbb R^{N}}(A_{0}u^{2}+B_{0}v^{2})+\frac{N+2}{2}\int_{\mathbb R^{N}}(u^{2}|\nabla u|^{2}+v^{2}|\nabla v|^{2})\\
\leq&\ \frac{2(N+\alpha+\beta)}{\alpha+\beta}\int_{\mathbb R^{N}}|u|^{\alpha}|v|^{\beta}\\
\leq&\ \frac{2(N+\alpha+\beta)}{\alpha+\beta}\bigg(\int_{\mathbb R^{N}}|u |^{\alpha+\beta}\bigg)^{\frac{\alpha}{\alpha+\beta}}\bigg(\int_{\mathbb R^{N}}|v |^{\alpha+\beta}\bigg)^{\frac{\beta}{\alpha+\beta}}\\
\leq&\ \frac{2\alpha(N+\alpha+\beta)}{(\alpha+\beta)^{2}} \int_{\mathbb R^{N}}|u |^{\alpha+\beta} +\frac{2\beta(N+\alpha+\beta)}{(\alpha+\beta)^{2}} \int_{\mathbb R^{N}}|v |^{\alpha+\beta} \\
\leq&\ \frac{2\alpha(N+\alpha+\beta)}{(\alpha+\beta)^{2}}\bigg(\int_{\mathbb R^{N}}u^{2}\bigg)^{\frac{l(\alpha+\beta)}{2}}\bigg(\int_{\mathbb R^{N}}(|u|^{\frac{4N}{N-2}}\bigg)^{\frac{(1-l)(N-2)(\alpha+\beta)}{4N}}\\
&\  +\frac{2\beta(N+\alpha+\beta)}{(\alpha+\beta)^{2}}\bigg(\int_{\mathbb R^{N}}v^{2}\bigg)^{\frac{l(\alpha+\beta)}{2}}\bigg(\int_{\mathbb R^{N}}(|v|^{\frac{4N}{N-2}}\bigg)^{\frac{(1-l)(N-2)(\alpha+\beta)}{4N}}\\
\leq&\ \frac{N+2}{2}\int_{\mathbb R^{N}}(A_{0}u^{2}+B_{0}v^{2})+C\int_{\mathbb R^{N}}(|u|^{\frac{4N}{N-2}}+|v|^{\frac{4N}{N-2}}).\endaligned$$
 So, by the Sobolev inequality,
$$\aligned \frac{N+2}{2}\int_{\mathbb R^{N}}(u^{2}|\nabla u|^{2}+v^{2}|\nabla v|^{2})&\leq C\int_{\mathbb R^{N}}(|u|^{\frac{4N}{N-2}}+|v|^{\frac{4N}{N-2}})\\
&\leq C_{1}\bigg(\int_{\mathbb R^{N}}u^{2}|\nabla u|^{2}+v^{2}|\nabla v|^{2}\bigg)^{\frac{N}{N-2}},\endaligned $$
which implies that $\int_{\mathbb R^{N}}(u^{2}|\nabla u|^{2}+v^{2}|\nabla v|^{2})$ is bounded away from zero on $\bar{\mathcal{ M}}$, then
$$\aligned\bar{I}(u,v)&= \frac{\alpha+\beta}{2(N+\alpha+\beta)}\int_{\mathbb R^{N}}|\nabla u|^{2}+\frac{\alpha+\beta}{2(N+\alpha+\beta)}\int_{\mathbb R^{N}}|\nabla v|^{2}\\
&\ \ \ \ +\frac{A_{0}(\alpha+\beta-2)}{2(N+\alpha+\beta)}\int_{\mathbb R^{N}}u^{2}+\frac{B_{0}(\alpha+\beta-2)}{2(N+\alpha+\beta)}\int_{\mathbb R^{N}}v^{2}\\
&\ \ \ \ +\frac{\alpha+\beta-2}{2(N+\alpha+\beta)}\int_{\mathbb R^{N}}u^{2}|\nabla u|^{2}+\frac{\alpha+\beta-2}{2(N+\alpha+\beta)}\int_{\mathbb R^{N}}v^{2}|\nabla v|^{2}\\
&>0.\endaligned $$
\end{proof}
\vskip4pt
The following result implies that $I$ is coercive on $\mathcal{M}$, which ensures that any minimizing sequence for $m$ is bounded.
\begin{Lemma}\label{le24}
 There exists $c> 0$ such that for any $(u,v)\in \mathcal{M},$
$$  I(u,v)\geq c\int_{\mathbb R^{N}}(|\nabla u|^{2}+|\nabla v|^{2}+u^{2}+ v^{2}+ u^{2}|\nabla u|^{2}+v^{2}|\nabla v|^{2}).$$
\end{Lemma}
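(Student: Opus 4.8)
The plan is to use the Pohozaev-type identity $G(u,v)=0$ (which is precisely the relation defining $\mathcal{M}$) to eliminate the sign-indefinite term $\int_{\mathbb{R}^{N}}|u|^{\alpha}|v|^{\beta}$ from $I(u,v)$, rewriting $I$ on $\mathcal{M}$ as a combination, with strictly positive coefficients, of the six nonnegative quantities $\int_{\mathbb{R}^{N}}|\nabla u|^{2}$, $\int_{\mathbb{R}^{N}}|\nabla v|^{2}$, $\int_{\mathbb{R}^{N}}u^{2}$, $\int_{\mathbb{R}^{N}}v^{2}$, $\int_{\mathbb{R}^{N}}u^{2}|\nabla u|^{2}$, $\int_{\mathbb{R}^{N}}v^{2}|\nabla v|^{2}$ appearing on the right-hand side; then $c$ can be taken to be the least of these coefficients.

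To carry this out, fix $(u,v)\in\mathcal{M}$ and solve $G(u,v)=0$ for the nonlinear term:
$$\frac{2}{\alpha+\beta}\int_{\mathbb{R}^{N}}|u|^{\alpha}|v|^{\beta}=\frac{1}{N+\alpha+\beta}\Big[\frac{N}{2}\int_{\mathbb{R}^{N}}(|\nabla u|^{2}+|\nabla v|^{2})+\frac{N+2}{2}\int_{\mathbb{R}^{N}}(A(x)u^{2}+B(x)v^{2}+u^{2}|\nabla u|^{2}+v^{2}|\nabla v|^{2})+\frac{1}{2}\int_{\mathbb{R}^{N}}(\nabla A(x)\cdot x\,u^{2}+\nabla B(x)\cdot x\,v^{2})\Big].$$
Inserting this into (\ref{eq21}) and collecting like terms, the coefficient of $\int_{\mathbb{R}^{N}}(|\nabla u|^{2}+|\nabla v|^{2})$ becomes $\frac12-\frac{N}{2(N+\alpha+\beta)}=\frac{\alpha+\beta}{2(N+\alpha+\beta)}$, that of $\int_{\mathbb{R}^{N}}(u^{2}|\nabla u|^{2}+v^{2}|\nabla v|^{2})$ becomes $\frac12-\frac{N+2}{2(N+\alpha+\beta)}=\frac{\alpha+\beta-2}{2(N+\alpha+\beta)}$, and the two potential terms merge into $\frac{1}{2(N+\alpha+\beta)}\int_{\mathbb{R}^{N}}\big((\alpha+\beta-2)A(x)-\nabla A(x)\cdot x\big)u^{2}+\frac{1}{2(N+\alpha+\beta)}\int_{\mathbb{R}^{N}}\big((\alpha+\beta-2)B(x)-\nabla B(x)\cdot x\big)v^{2}$; this is the very computation already carried out at the end of the proof of Lemma \ref{le23} in the model case $A\equiv A_{0}$, $B\equiv B_{0}$. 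One thus obtains, for every $(u,v)\in\mathcal{M}$,
$$I(u,v)=\frac{\alpha+\beta}{2(N+\alpha+\beta)}\int_{\mathbb{R}^{N}}(|\nabla u|^{2}+|\nabla v|^{2})+\frac{\alpha+\beta-2}{2(N+\alpha+\beta)}\int_{\mathbb{R}^{N}}(u^{2}|\nabla u|^{2}+v^{2}|\nabla v|^{2})+\frac{1}{2(N+\alpha+\beta)}\int_{\mathbb{R}^{N}}\big((\alpha+\beta-2)A(x)-\nabla A(x)\cdot x\big)u^{2}+\frac{1}{2(N+\alpha+\beta)}\int_{\mathbb{R}^{N}}\big((\alpha+\beta-2)B(x)-\nabla B(x)\cdot x\big)v^{2}.$$

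It remains to bound each term from below. By $(A_{2})$ and $(B_{2})$ the two potential integrands are nonnegative, and since $\alpha+\beta>2$ the gradient and quasilinear coefficients are strictly positive; so the estimate follows as soon as the potential terms control positive multiples of $\int_{\mathbb{R}^{N}}u^{2}$ and $\int_{\mathbb{R}^{N}}v^{2}$, and this is the one step that is not purely formal, since $(A_{2})$ on its own only yields $(\alpha+\beta-2)A(x)-\nabla A(x)\cdot x\ge0$. Here one uses that this quantity is bounded below by a positive constant in the present setting: by $(A_{1})$ the periodic $C^{1}$ function $A$ has periodic, hence bounded, gradient, so the requirement $\nabla A\cdot x\in L^{\infty}(\mathbb{R}^{N})$ from $(A_{2})$ forces $\nabla A\equiv0$, i.e. $A\equiv A_{0}$, whence $(\alpha+\beta-2)A(x)-\nabla A(x)\cdot x\equiv(\alpha+\beta-2)A_{0}>0$, and similarly $(\alpha+\beta-2)B(x)-\nabla B(x)\cdot x\equiv(\alpha+\beta-2)B_{0}>0$. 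Combining, $I(u,v)\ge c\int_{\mathbb{R}^{N}}(|\nabla u|^{2}+|\nabla v|^{2}+u^{2}+v^{2}+u^{2}|\nabla u|^{2}+v^{2}|\nabla v|^{2})$ with, for instance, $c=\frac{(\alpha+\beta-2)\min\{1,A_{0},B_{0}\}}{2(N+\alpha+\beta)}$. I expect this last point (ensuring that $\int u^{2}$ and $\int v^{2}$ are genuinely controlled, not merely that the corresponding terms are $\ge0$) to be the only real obstacle; the substitution and the bookkeeping of constants are routine.
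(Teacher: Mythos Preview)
Your argument is correct but follows a genuinely different route from the paper. The paper does not substitute $G=0$ into $I$; instead it exploits the scaling $(u_t,v_t)=(tu(t^{-1}\cdot),tv(t^{-1}\cdot))$: for a fixed $t\in(0,1)$ it computes $I(u_t,v_t)-t^{N+\alpha+\beta}I(u,v)$, which cancels the coupling term exactly and leaves coefficients of the form $\tfrac12(t^{N}-t^{N+\alpha+\beta})$, $\tfrac12(t^{N+2}-t^{N+\alpha+\beta})$ and $\tfrac12(t^{N+2}A(tx)-t^{N+\alpha+\beta}A(x))$ in front of the six integrals. Using only that the periodic potential satisfies $0<A_0\le A(x)\le A_1<\infty$, one chooses $t$ small enough to make every coefficient bounded below by some $\gamma>0$; then Lemma~\ref{le22} gives $I(u_t,v_t)\le I(u,v)$ on $\mathcal M$, so $(1-t^{N+\alpha+\beta})I(u,v)$ dominates the desired sum.

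Your method---the identity $I=I-\tfrac{1}{N+\alpha+\beta}G$ on $\mathcal M$---is in fact the formula the paper itself uses later (in the proof of Lemma~\ref{le34}), and is more direct here. The interesting ingredient is your observation that $(A_1)$ together with $\nabla A(x)\cdot x\in L^{\infty}(\mathbb R^{N})$ forces $\nabla A\equiv 0$: indeed, $\nabla A$ is periodic and continuous, hence if $\partial_j A(x_0)\neq 0$ then along $x_0+n\tau_j e_j$ one gets $\nabla A\cdot x\to\pm\infty$. This is correct and never stated in the paper, and it renders the hypotheses $(A_2),(A_3),(B_2),(B_3)$ essentially vacuous. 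The paper's scaling proof, by contrast, does not rely on this collapse; it uses only boundedness above and below of $A,B$, and would therefore continue to work under any relaxation of $(A_2)$ that permitted genuinely nonconstant periodic potentials.
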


\begin{proof} Take $(u,v)\in\mathcal{ M}$ and choose $t\in(0, 1).$  By a direct computation, there holds
$$\aligned &\ I(u_{t},v_{t})-t^{N+\alpha+\beta}I(u,v)\\
=& \ \int_{\mathbb R^{N}}\bigg(\frac{t^{N}}{2}-\frac{t^{N+\alpha+\beta}}{2}\bigg)(|\nabla v|^{2}+|\nabla v|^{2})\\
&\ +\int_{\mathbb R^{N}}\bigg(\frac{t^{N+2}}{2}-\frac{t^{N+\alpha+\beta}}{2}\bigg)(u^{2}|\nabla u|^{2}+v^{2}|\nabla v|^{2})\\
&\ +\int_{\mathbb R^{N}}\bigg(\frac{t^{N+2}}{2}A(tx)-\frac{t^{N+\alpha+\beta}}{2}A(x)\bigg)u^{2}\\
&\ +\int_{\mathbb R^{N}}\bigg(\frac{t^{N+2}}{2}B(tx)-\frac{t^{N+\alpha+\beta}}{2}B(x)\bigg)v^{2}.\endaligned$$
By $(A_{1}),(B_{1})$, there exist $A_{1},B_{1}$ such that $A(tx)\geq A_{0}\geq\delta_{1} A_{1}\geq \delta_{1} A(x)$ for  $\delta_{1}\in(0, 1)$ depending only on $A_{0}$ and $A_{1}$, $B(tx)\geq B_{0}\geq\delta_{2}B_{1}\geq \delta_{2} B(x)$ for  $\delta_{2}\in(0, 1)$ depending only on $B_{0}$ and $B_{1}$. By choosing a smaller $t$,  we get that
$$\frac{t^{N+2}}{2}A(tx)-\frac{t^{N+\alpha+\beta}}{2}A(x)\geq \bigg(\delta_{1}\frac{t^{N+2}}{2}-\frac{t^{N+\alpha+\beta}}{2}\bigg)A(x)\geq\gamma_{1},  $$
$$\frac{t^{N+2}}{2}B(tx)-\frac{t^{N+\alpha+\beta}}{2}B(x)\geq \bigg(\delta_{2}\frac{t^{N+2}}{2}-\frac{t^{N+\alpha+\beta}}{2}\bigg)B(x)\geq\gamma_{2} $$
for  positive fixed constant $\gamma=\min\{\gamma_{1},\gamma_{2}\}> 0$. By Lemma \ref{le22}, $I(u_{t},v_{t})\leq I(u,v)$ and then
$$\aligned &\ (1-t^{N+\alpha+\beta})I(u,v)\\
\geq&\ I(u_{t},v_{t})-t^{N+\alpha+\beta}I(u,v)\\
\geq& \ \gamma\int_{\mathbb R^{N}}\left(|\nabla u|^{2}+|\nabla v|^{2}+u^{2}+ v^{2}+u^{2}|\nabla u|^{2}+v^{2}|\nabla v|^{2}\right).\endaligned$$
We conclude by taking a smaller $\gamma$ and choosing $c=\frac{\gamma }{1-t^{N+\alpha+\beta}}$.
\end{proof}

\mysection{Proof of Theorem \ref{th11} }

 In this section, we prove the existence of positive ground states solutions  to (\ref{eq11}). We recall here the result due to Lions (Lemma 1.1 of \cite{l}, part 2) and establish the following lemma.

\vskip4pt
\begin{Lemma}\label{le31}
Let $ r>0, \ q\in[2,\frac{4N}{N-2}).$ If $ \{u_{n}\}$ is bounded in $H$ and
$$ \lim_{n\rightarrow\infty}\sup_{y\in \mathbb R^{N}}\int_{B_{y}(r)}|u_{n}|^{q}=0,$$
then we have $u_{n}\rightarrow0$ in $ L^{p}(\mathbb R^{N})$ for $p\in(2,\frac{4N}{N-2}).$
\end{Lemma}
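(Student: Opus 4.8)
The plan is to deduce the statement from the recalled Lions vanishing lemma, applied not to $u_n$ directly but to the squares $w_n:=u_n^2$. The crucial point is that membership in $H$ forces $u_n^2$ to lie in $H^1(\mathbb{R}^N)$ with a uniform bound: since $\nabla w_n=2u_n\nabla u_n$ we get $\int_{\mathbb{R}^N}|\nabla w_n|^2=4\int_{\mathbb{R}^N}u_n^2|\nabla u_n|^2\le C$ and $\int_{\mathbb{R}^N}w_n=\int_{\mathbb{R}^N}u_n^2\le C$; as $w_n\ge 0$, the Sobolev inequality gives $|w_n|_{L^{2^*}}\le C|\nabla w_n|_{L^2}\le C$, and interpolating between $L^1$ and $L^{2^*}$ (recall $1\le 2\le 2^*$) shows $\{w_n\}$ is bounded in $L^2(\mathbb{R}^N)$, hence in $H^1(\mathbb{R}^N)$. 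In particular $\{u_n\}$ is bounded in $L^s(\mathbb{R}^N)$ for every $s\in[2,\frac{4N}{N-2}]$, because $|u_n|^s=|w_n|^{s/2}$ and $s/2\in[1,2^*]$. This is exactly the gain that turns the quasilinear exponent window $(2,\frac{4N}{N-2})$ for $u_n$ into the semilinear one $(2,2^*)$ for $w_n$.

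Next I would upgrade the hypothesis to vanishing of $w_n$ in $L^2$ over balls, that is, show $\sup_{y\in\mathbb{R}^N}\int_{B_y(r)}|u_n|^4\to 0$. If $q\ge 4$ this follows from H\"older's inequality on the ball $B_y(r)$, whose measure is a fixed finite number, giving $\int_{B_y(r)}|u_n|^4\le |B_y(r)|^{1-4/q}\big(\int_{B_y(r)}|u_n|^q\big)^{4/q}$. If $q<4$, I would write $4=\lambda q+(1-\lambda)\frac{4N}{N-2}$ with $\lambda\in(0,1)$ and use H\"older together with the uniform bound on $|u_n|_{L^{4N/(N-2)}}$ from the previous step, obtaining $\int_{B_y(r)}|u_n|^4\le\big(\int_{B_y(r)}|u_n|^q\big)^{\lambda}\,C^{1-\lambda}$. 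In either case the supremum over $y$ tends to $0$ by hypothesis.

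Now I would invoke the recalled lemma: $\{w_n\}$ is bounded in $H^1(\mathbb{R}^N)$ and $\sup_y\int_{B_y(r)}|w_n|^2=\sup_y\int_{B_y(r)}|u_n|^4\to 0$, so $w_n\to 0$ in $L^\sigma(\mathbb{R}^N)$ for every $\sigma\in(2,2^*)$; equivalently $u_n\to 0$ in $L^{2\sigma}(\mathbb{R}^N)$ for every $2\sigma\in(4,\frac{4N}{N-2})$. To cover the remaining range $p\in(2,4]$, I would fix one $\sigma_0\in(2,2^*)$ and interpolate: $|u_n|_{L^p}\le|u_n|_{L^2}^{1-t}\,|u_n|_{L^{2\sigma_0}}^{t}$ with $\tfrac1p=\tfrac{1-t}{2}+\tfrac{t}{2\sigma_0}$ and $t\in(0,1)$ (legitimate since $2<p\le 4<2\sigma_0$); the $L^2$-norms are bounded and the $L^{2\sigma_0}$-norms vanish, so $u_n\to 0$ in $L^p$. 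Together with the range $(4,\frac{4N}{N-2})$ this gives $u_n\to 0$ in $L^p(\mathbb{R}^N)$ for all $p\in(2,\frac{4N}{N-2})$.

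I expect the only genuinely substantive step to be the first one: observing that the $H$-bound controls $u_n^2$ in $H^1(\mathbb{R}^N)$, which is what makes Lions' lemma applicable at all. Everything else is elementary exponent bookkeeping --- H\"older on a fixed ball, the global Sobolev bound, and a one-parameter interpolation --- together with the black-box use of the lemma quoted from \cite{l}.
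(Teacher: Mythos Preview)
Your proposal is correct and follows the standard route for this type of vanishing lemma in the quasilinear setting: the key observation that boundedness in $H$ forces $\{u_n^2\}$ to be bounded in $H^1(\mathbb{R}^N)$, after which Lions' classical lemma applies to $w_n=u_n^2$ and interpolation closes the gap. The paper does not write out a proof at all---it simply refers to \cite[Lemma 2.2]{wz}, whose argument is precisely the one you have reconstructed.
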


\begin{proof}  The proof is similar to the proof of  \cite[Lemma 2.2]{wz}.
\end{proof}
\vskip4pt
\begin{Lemma}\label{le32}
 Let  $u_{n}\rightharpoonup u, v_{n}\rightharpoonup v$  in $H^{1}(\mathbb R^{N})$, $u_{n}\rightarrow u, v_{n}\rightarrow v$ a.e in $\mathbb R^{N}$. Then
$$
\lim_{n\rightarrow\infty}\int_{\mathbb R^{N}}|u_{n}|^{\alpha}|v_{n}|^{\beta}-\int_{\mathbb R^{N}}|u|^{\alpha}|v|^{\beta}
=\lim_{n\rightarrow\infty}\int_{\mathbb R^{N}}|u_{n}-u|^{\alpha}|v_{n}-v|^{\beta}.
$$
\end{Lemma}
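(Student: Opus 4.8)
The plan is to prove a two--variable Br\'ezis--Lieb lemma for the coupling term $j(s,\sigma):=|s|^{\alpha}|\sigma|^{\beta}$ by the classical truncation argument. Put $w_{n}:=u_{n}-u$ and $z_{n}:=v_{n}-v$; then $w_{n}\rightharpoonup 0$, $z_{n}\rightharpoonup 0$ in $H^{1}(\mathbb R^{N})$ and $w_{n}\to 0$, $z_{n}\to 0$ a.e.\ in $\mathbb R^{N}$. Since $\{u_{n}\}$, $\{v_{n}\}$ are bounded in $H$ (as is always the case when the lemma is applied) and $H\hookrightarrow L^{\alpha+\beta}(\mathbb R^{N})$ because $\alpha+\beta<\frac{4N}{N-2}$ --- recall that $u\in H$ forces $u^{2}\in\mathcal D^{1,2}(\mathbb R^{N})\hookrightarrow L^{2^*}(\mathbb R^{N})$, hence $u\in L^{4N/(N-2)}(\mathbb R^{N})$ --- we may assume that $\{|w_{n}|^{\alpha+\beta}\}$ and $\{|z_{n}|^{\alpha+\beta}\}$ are bounded in $L^{1}(\mathbb R^{N})$ and that $|u|^{\alpha}|v|^{\beta}$, $|u|^{\alpha+\beta}$, $|v|^{\alpha+\beta}\in L^{1}(\mathbb R^{N})$.

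The technical core is the elementary pointwise inequality: for every $\varepsilon>0$ there is $C_{\varepsilon}>0$ such that
$$
\Big|\,|s+t|^{\alpha}|\sigma+\tau|^{\beta}-|s|^{\alpha}|\sigma|^{\beta}\,\Big|
\le\varepsilon\bigl(|s|^{\alpha+\beta}+|\sigma|^{\alpha+\beta}\bigr)
+C_{\varepsilon}\bigl(|t|^{\alpha+\beta}+|\tau|^{\alpha+\beta}\bigr)\qquad\text{for all }s,t,\sigma,\tau\in\mathbb R.
$$
To obtain it I would write $|s+t|^{\alpha}|\sigma+\tau|^{\beta}-|s|^{\alpha}|\sigma|^{\beta}=(|s+t|^{\alpha}-|s|^{\alpha})|\sigma+\tau|^{\beta}+|s|^{\alpha}(|\sigma+\tau|^{\beta}-|\sigma|^{\beta})$, estimate the two scalar differences by the standard Br\'ezis--Lieb inequality $\bigl||a+b|^{p}-|a|^{p}\bigr|\le\eta|a|^{p}+C_{\eta}|b|^{p}$ (valid for $p=\alpha,\beta>1$, with $a=s$, resp.\ $a=\sigma$, the variable that will vanish), bound $|\sigma+\tau|^{\beta}\le 2^{\beta-1}(|\sigma|^{\beta}+|\tau|^{\beta})$, and finally convert every resulting mixed product into pure powers by Young's inequality. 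The only delicate point --- and the step I expect to be the main, though purely computational, obstacle --- is the bookkeeping of the Young parameters: the terms $C_{\eta}|t|^{\alpha}|\sigma|^{\beta}$ and $C_{\eta}|s|^{\alpha}|\tau|^{\beta}$ carry the large constant $C_{\eta}$ in front of a product mixing a vanishing variable with a fixed one, so they must be split with a free small parameter, keeping only an $\varepsilon$--fraction of the vanishing--variable power and dumping the remainder into the fixed--variable power; fixing $\eta$ small first, and then that second parameter small in terms of $C_{\eta}$, makes all coefficients of $|s|^{\alpha+\beta}$, $|\sigma|^{\alpha+\beta}$ at most $\varepsilon$, while the coefficients of $|t|^{\alpha+\beta}$, $|\tau|^{\alpha+\beta}$ merge into $C_{\varepsilon}$.

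With this inequality in hand, set $F_{n}:=|u_{n}|^{\alpha}|v_{n}|^{\beta}-|u_{n}-u|^{\alpha}|v_{n}-v|^{\beta}$ and apply it with $s=w_{n}$, $t=u$, $\sigma=z_{n}$, $\tau=v$, which yields $\bigl|F_{n}-|u|^{\alpha}|v|^{\beta}\bigr|\le\varepsilon(|w_{n}|^{\alpha+\beta}+|z_{n}|^{\alpha+\beta})+C_{\varepsilon}(|u|^{\alpha+\beta}+|v|^{\alpha+\beta})+|u|^{\alpha}|v|^{\beta}$. Define $W_{n,\varepsilon}:=\bigl(\,\bigl|F_{n}-|u|^{\alpha}|v|^{\beta}\bigr|-\varepsilon(|w_{n}|^{\alpha+\beta}+|z_{n}|^{\alpha+\beta})\,\bigr)^{+}$. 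Then $0\le W_{n,\varepsilon}\le C_{\varepsilon}(|u|^{\alpha+\beta}+|v|^{\alpha+\beta})+|u|^{\alpha}|v|^{\beta}\in L^{1}(\mathbb R^{N})$ uniformly in $n$, and $W_{n,\varepsilon}\to 0$ a.e.\ because $u_{n}\to u$, $v_{n}\to v$, $w_{n}\to 0$, $z_{n}\to 0$ a.e.\ force $F_{n}\to|u|^{\alpha}|v|^{\beta}$ a.e. By dominated convergence $\int_{\mathbb R^{N}}W_{n,\varepsilon}\to 0$, so, using $\bigl|F_{n}-|u|^{\alpha}|v|^{\beta}\bigr|\le W_{n,\varepsilon}+\varepsilon(|w_{n}|^{\alpha+\beta}+|z_{n}|^{\alpha+\beta})$ together with the uniform $L^{1}$--bound on the last term, $\limsup_{n\to\infty}\int_{\mathbb R^{N}}\bigl|F_{n}-|u|^{\alpha}|v|^{\beta}\bigr|\le C\varepsilon$. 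Letting $\varepsilon\to 0$ gives $\int_{\mathbb R^{N}}\bigl(|u_{n}|^{\alpha}|v_{n}|^{\beta}-|u_{n}-u|^{\alpha}|v_{n}-v|^{\beta}\bigr)\to\int_{\mathbb R^{N}}|u|^{\alpha}|v|^{\beta}$, which is exactly the asserted identity (in particular, once this unconditional convergence is known, the limit on either side of the statement exists if and only if the other does).
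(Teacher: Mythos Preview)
Your argument is correct and self-contained: you run the classical Br\'ezis--Lieb truncation scheme directly for the two--variable integrand $j(s,\sigma)=|s|^{\alpha}|\sigma|^{\beta}$, first obtaining the pointwise $\varepsilon$--$C_{\varepsilon}$ inequality and then applying dominated convergence to the truncated remainder $W_{n,\varepsilon}$. The paper takes a different route: it writes
\[
|u_{n}|^{\alpha}|v_{n}|^{\beta}-|u_{n}-u|^{\alpha}|v_{n}-v|^{\beta}
=(|u_{n}|^{\alpha}-|u_{n}-u|^{\alpha})|v_{n}|^{\beta}
+|u_{n}-u|^{\alpha}\bigl(|v_{n}|^{\beta}-|v_{n}-v|^{\beta}\bigr),
\]
invokes \cite[Lemma~2.5]{mj} to get the \emph{strong} convergences $|u_{n}|^{\alpha}-|u_{n}-u|^{\alpha}\to|u|^{\alpha}$ in $L^{p/\alpha}$ and $|v_{n}|^{\beta}-|v_{n}-v|^{\beta}\to|v|^{\beta}$ in $L^{p/\beta}$ (with $p=\alpha+\beta$), and then pairs each strongly convergent factor with the complementary weakly convergent one ($|v_{n}|^{\beta}\rightharpoonup|v|^{\beta}$ in $L^{p/\beta}$, $|u_{n}-u|^{\alpha}\rightharpoonup 0$ in $L^{p/\alpha}$). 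Their proof is shorter because the hard work is outsourced to the cited lemma, while yours is longer but elementary and requires no external input beyond Young's inequality and dominated convergence. Both proofs tacitly need $\{u_{n}\},\{v_{n}\}$ bounded in $L^{\alpha+\beta}(\mathbb R^{N})$, which for $\alpha+\beta>2^{*}$ does not follow from the $H^{1}$--hypothesis alone; you are right to note that in the paper's applications the sequences lie in $H$, whence the required bound comes from $u^{2}\in\mathcal D^{1,2}\hookrightarrow L^{2^{*}}$.
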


\begin{proof}
For $n=1,\ 2,\ldots$, we have that
$$\begin{aligned}
&\ \int_{\mathbb R^{N}}|u_n|^\alpha |v_n|^\beta -\int_{\mathbb R^{N}}|u_n-u|^\alpha |v_n-v|^\beta\\
=&\int_{\mathbb R^{N}}(|u_n|^\alpha-|u_n-u|^\alpha) |v_n|^\beta+\int_{\mathbb R^{N}}|u_n-u|^\alpha (|v_n|^\beta-|v_n-v|^\beta).
\end{aligned}$$
Since $u_n\rightharpoonup u, v_n\rightharpoonup v$ in $H^1 (\mathbb{R}^N)$, from \cite[Lemma 2.5]{mj}, one has
$$\int_{\mathbb R^{N}}(|u_n|^\alpha-|u_n-u|^\alpha-|u|^\alpha)^{\frac{p}{\alpha}}\rightarrow0,\ \ n\rightarrow\infty,$$
which means that $|u_n|^\alpha-|u_n-u|^\alpha \rightarrow |u|^\alpha $  in $L^{\frac{p}{\alpha}} (\mathbb{R}^N)$.
Using $|v_n|^\beta \rightharpoonup |v|^\beta$ in $L^{\frac{p}{\beta}} (\mathbb{R}^N)$, it follows from $\alpha +\beta = p$ that
$$\int_{\mathbb R^{N}}(|u_n|^\alpha-|u_n-u|^\alpha) |v_n|^\beta\rightarrow\int_{\mathbb R^{N}}|u|^\alpha |v|^\beta,\ \ n\rightarrow\infty.$$

Similarly, $|v_n|^\beta-|v_n-v|^\beta \rightarrow |v|^\beta $  in $L^{\frac{p}{\beta}} (\mathbb{R}^N)$.
As $|u_n-u|^\alpha \rightharpoonup 0$ in $L^{\frac{p}{\alpha}} (\mathbb{R}^N)$, we obtain that
$$\int_{\mathbb R^{N}}|u_n-u|^\alpha (|v_n|^\beta-|v_n-v|^\beta)\rightarrow0,\ \ n\rightarrow\infty.$$
This proves the lemma.
\end{proof}
\vskip4pt
The following Lemma \ref{le33} is due to  Poppenberg, Schmitt and Wang  from \cite[Lemma 2]{psw}.
\begin{Lemma}\label{le33}
Assume that  $u_{n}\rightharpoonup u$ in $H^{1}(\mathbb R^{N})$. Then
\begin{equation}\label{eq3.1}
\aligned\liminf_{n\rightarrow\infty}\int_{\mathbb R^{N}} u_{n}^{2}|\nabla u_{n}|^{2} &\geq \liminf_{n\rightarrow\infty}\int_{\mathbb R^{N}}(u_{n}-u)^{2}|\nabla u_{n}-\nabla u)|^{2}\\
&\ \ \ \ +\int_{\mathbb R^{N}} u^{2}|\nabla u|^{2}.\endaligned
\end{equation}
\end{Lemma}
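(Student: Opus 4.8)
The statement to prove is Lemma \ref{le33}: a Brezis--Lieb type inequality for the quasilinear term $\int u_n^2|\nabla u_n|^2$. The plan is to write $u_n = u + w_n$ with $w_n \rightharpoonup 0$ in $H^1$, and to expand the integrand, tracking which cross terms vanish in the limit and which survive with a favorable sign.

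First I would expand $u_n^2|\nabla u_n|^2 = (u+w_n)^2|\nabla u + \nabla w_n|^2$. Multiplying out gives nine terms: the pure terms $u^2|\nabla u|^2$ and $w_n^2|\nabla w_n|^2$, and seven cross terms, each of the form $u^a w_n^b (\nabla u)^c(\nabla w_n)^d$ with $a+b=2$, $c+d=2$. The idea is that every cross term containing a positive power of $w_n$ (with $w_n$ appearing undifferentiated) goes to zero, because on any ball $w_n \to 0$ strongly in $L^q$ for $q<2^*$ (Rellich) while the remaining factors are bounded in a suitable dual space; and by a tail estimate using the uniform $H^1$ bound the contribution outside a large ball is uniformly small. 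The delicate cross term is $u^2\,\nabla u\cdot\nabla w_n$ and $2 u w_n |\nabla w_n|^2$: the first is linear in $\nabla w_n$ and integrates against the fixed $L^2$ function $u^2\nabla u$ (note $u^2\nabla u\in L^2$ since $u\in H$), so it vanishes by weak convergence $\nabla w_n \rightharpoonup 0$ in $L^2$; for the second, $uw_n \to 0$ in, say, $L^{2}_{loc}$ actually we need to pair it against $|\nabla w_n|^2$ which is only bounded in $L^1$, and this is exactly where an inequality rather than an equality is forced. Here one discards $2uw_n|\nabla w_n|^2$ and the term $w_n^2 |\nabla w_n|^2$ is kept on the right-hand side, so one uses $(u+w_n)^2|\nabla w_n|^2 \ge w_n^2|\nabla w_n|^2 + 2uw_n|\nabla w_n|^2$ — but since the sign of $uw_n$ is not controlled pointwise one instead argues: after removing all genuinely vanishing terms, what remains is $\int w_n^2|\nabla w_n|^2 + \int u^2|\nabla u|^2 + o(1) + R_n$ where $R_n = \int 2uw_n|\nabla w_n|^2 + (\text{terms}\ \to 0)$, and one shows $\liminf R_n \ge 0$, or more simply that $R_n$ can be absorbed; following \cite{psw} the clean route is Fatou's lemma applied after passing to an a.e.-convergent subsequence, which directly yields the $\liminf \ge \liminf + \int u^2|\nabla u|^2$ structure.

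Concretely, the cleanest argument I would carry out: pass to a subsequence so that $\nabla u_n \to \nabla u$ a.e. and $|\nabla u_n|^2 \rightharpoonup |\nabla u|^2 + \mu$ weakly-$*$ for a nonnegative measure, or — simpler — just use that $u_n \to u$ and $\nabla u_n \to \nabla u$ a.e. (after subsequence), write $a_n := u_n^2|\nabla u_n|^2 - (u_n-u)^2|\nabla(u_n-u)|^2$, show $a_n \to u^2|\nabla u|^2$ a.e., and then apply Fatou to $a_n$ together with the elementary pointwise bound showing $a_n$ is bounded below by an $L^1$ function uniformly — actually $a_n$ need not be sign-definite, so instead I would follow Poppenberg--Schmitt--Wang directly: expand, identify the single problematic term, and use that $\int (u_n-u)^2|\nabla(u_n-u)|^2 \le \int u_n^2|\nabla u_n|^2 + o(1)$ after the vanishing cross terms are removed — the $o(1)$ coming from Rellich on balls plus the uniform tail bound. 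The main obstacle is precisely the cross term $\int u w_n |\nabla w_n|^2$ which cannot be shown to vanish (only $|\nabla w_n|^2$ bounded in $L^1$, and $L^1$ has no useful weak compactness); the resolution is that this term is dominated in absolute value by a quantity controlled via Young's inequality by $\varepsilon\int w_n^2|\nabla w_n|^2 + C_\varepsilon \int u^2|\nabla w_n|^2 \cdot (\text{something})$, hmm — but more honestly the point of the lemma being an \emph{inequality} is that this term is simply left on the stronger (left) side, and Fatou's lemma on the a.e.-convergent nonnegative decomposition delivers the result. Since the lemma is quoted verbatim from \cite[Lemma 2]{psw}, in the paper I would simply cite it; if a proof were demanded I would reproduce the expand-and-discard argument above, with the a.e. convergence plus Fatou step as the crux.
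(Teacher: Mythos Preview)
The paper does not prove this lemma at all: it simply attributes it to Poppenberg, Schmitt and Wang \cite[Lemma~2]{psw} and moves on. Your proposal, after an exploratory and admittedly inconclusive sketch, lands in exactly the same place --- citing \cite{psw} --- so in that sense you match the paper. The expand-and-track-cross-terms outline you give is in the right spirit, but as you yourself notice the cross term $\int u\,w_n\,|\nabla w_n|^2$ resists a direct vanishing argument, and your sketch does not actually close that gap; if you were required to supply a full proof rather than a citation you would need to carry out the argument in \cite{psw} more carefully than what is written here.
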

\vskip4pt

\begin{Lemma}\label{le34}
$m$ is achieved at some $(u,v)\in\mathcal{ M}$.
\end{Lemma}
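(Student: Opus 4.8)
The plan is to run a concentration--compactness argument on a minimizing sequence for $m$ over $\mathcal{M}$, using the tools assembled in Section~3 (Lemmas~\ref{le31}--\ref{le33}) together with the coercivity estimate of Lemma~\ref{le24} and the periodicity of $A,B$.

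First I would take a minimizing sequence $\{(u_n,v_n)\}\subset\mathcal{M}$ with $I(u_n,v_n)\to m$. By Lemma~\ref{le24}, $I(u_n,v_n)$ controls $\int_{\mathbb R^N}(|\nabla u_n|^2+|\nabla v_n|^2+u_n^2+v_n^2+u_n^2|\nabla u_n|^2+v_n^2|\nabla v_n|^2)$, so $\{(u_n,v_n)\}$ is bounded in $X$, in particular bounded in $H^1(\mathbb R^N)\times H^1(\mathbb R^N)$. Next I would rule out vanishing: if
$$\lim_{n\to\infty}\sup_{y\in\mathbb R^N}\int_{B_y(1)}\bigl(|u_n|^{\alpha+\beta}+|v_n|^{\alpha+\beta}\bigr)=0,$$
then by Lemma~\ref{le31} (with $q=p=\alpha+\beta\in(2,\frac{4N}{N-2})$) we get $u_n,v_n\to0$ in $L^{\alpha+\beta}(\mathbb R^N)$, hence $\int_{\mathbb R^N}|u_n|^\alpha|v_n|^\beta\to0$ by H\"older; but then the defining equation $G(u_n,v_n)=0$ forces the (nonnegative) quadratic-type terms, in particular $\int u_n^2|\nabla u_n|^2+v_n^2|\nabla v_n|^2$, to go to zero, and repeating the Sobolev/interpolation estimate from the proof of Lemma~\ref{le23} on $\mathcal{M}$ shows $\int u_n^2|\nabla u_n|^2+v_n^2|\nabla v_n|^2$ is bounded away from zero, a contradiction. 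Therefore non-vanishing holds: there exist $\delta>0$ and $y_n\in\mathbb R^N$ with $\int_{B_{y_n}(1)}(|u_n|^{\alpha+\beta}+|v_n|^{\alpha+\beta})\ge\delta$. Since $A,B$ are $\tau_i$-periodic, I may replace $y_n$ by a point of the lattice $\prod_i\tau_i\mathbb Z$ and translate: setting $\tilde u_n:=u_n(\cdot+y_n)$, $\tilde v_n:=v_n(\cdot+y_n)$, the functionals $I$, $G$ and the set $\mathcal{M}$ are invariant, so $\{(\tilde u_n,\tilde v_n)\}$ is still a minimizing sequence in $\mathcal{M}$, and now passing to a subsequence $\tilde u_n\rightharpoonup u$, $\tilde v_n\rightharpoonup v$ in $H^1$, a.e.\ in $\mathbb R^N$, with $(u,v)\ne(0,0)$ because the local $L^{\alpha+\beta}$-mass $\delta$ survives by the compact embedding $H^1(B_0(2))\hookrightarrow L^{\alpha+\beta}(B_0(2))$.

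It remains to show $(u,v)\in\mathcal{M}$ and $I(u,v)\le m$, which gives $I(u,v)=m$. I would apply the Brezis--Lieb-type splittings: Lemma~\ref{le32} for the coupling term $\int|u_n|^\alpha|v_n|^\beta$, Lemma~\ref{le33} for the quasilinear terms $\int u_n^2|\nabla u_n|^2$ and $\int v_n^2|\nabla v_n|^2$, and the standard weak-lower-semicontinuity/Brezis--Lieb decomposition for $\int(|\nabla u_n|^2+A(x)u_n^2)$ etc.\ (using $A\in L^\infty_{\mathrm{loc}}$, $A\ge A_0$, and Fatou where needed). Writing $w_n:=\tilde u_n-u$, $z_n:=\tilde v_n-v$, these give
$$G(\tilde u_n,\tilde v_n)\ \ge\ G(u,v)+G_\infty(w_n,z_n)+o(1),$$
where $G_\infty$ is the analogue with the genuinely $\ge0$ structure. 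Since $G(\tilde u_n,\tilde v_n)=0$, at least one of the following holds along a subsequence: either $G(u,v)\le0$, or $\liminf G_\infty(w_n,z_n)\le0$. In the first case, Lemma~\ref{le22} supplies $\bar t\in(0,1]$ with $(u_{\bar t},v_{\bar t})\in\mathcal{M}$; using the representation
$$I(u,v)=\frac{\alpha+\beta}{2(N+\alpha+\beta)}\int|\nabla u|^2+\cdots+\frac{\alpha+\beta-2}{2(N+\alpha+\beta)}\int u^2|\nabla u|^2+\cdots$$
on $\mathcal{M}$ (an identity valid for any pair in $\mathcal{M}$, as in the last display of the proof of Lemma~\ref{le23}), which is a sum of nonnegative terms each nondecreasing under the scaling $w\mapsto w_t$ for $t\le1$, together with the weak lower semicontinuity of all these terms, one deduces $m\le I(u_{\bar t},v_{\bar t})\le I(u,v)\le\liminf I(\tilde u_n,\tilde v_n)=m$; this forces $\bar t=1$, hence $(u,v)\in\mathcal{M}$ and $I(u,v)=m$. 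The second alternative, $\liminf G_\infty(w_n,z_n)\le0$, is excluded exactly as vanishing was excluded: on the ``$G_\infty\le0$'' regime the same Sobolev/interpolation bound forces $\int w_n^2|\nabla w_n|^2+z_n^2|\nabla z_n|^2$ simultaneously bounded below and (from $G(\tilde u_n,\tilde v_n)=0$, $G(u,v)\ge0$ by the first case being closed, and the splitting) tending to $0$ unless $(u,v)$ already absorbs all the energy --- in which case again $(u,v)\in\mathcal{M}$ with $I(u,v)=m$.

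The main obstacle is the handling of the nonlocal/quasilinear term $\int u_n^2|\nabla u_n|^2$: unlike the semilinear case one only has the one-sided inequality of Lemma~\ref{le33}, so the Brezis--Lieb bookkeeping for $G$ yields an inequality rather than an equality, and I must be careful that the inequality points the right way in both the ``$G(u,v)\le0$'' and the ``$G_\infty(w_n,z_n)\le0$'' dichotomy. The second delicate point is translation invariance: it is genuinely used that $A,B$ are periodic (not merely bounded) so that translating by lattice vectors leaves $I$ and $\mathcal{M}$ unchanged; without periodicity the limiting problem would differ from the original and one would only get $I(u,v)\le m$ for a \emph{different} functional. Once $(u,v)\in\mathcal{M}$ with $I(u,v)=m$ is established, positivity follows by replacing $(u,v)$ by $(|u|,|v|)$ (which does not increase $I$ and stays in $\mathcal{M}$ after the scaling of Lemma~\ref{le22}) and invoking the strong maximum principle on the corresponding elliptic system, so the minimizer is a positive ground state solution of \eqref{eq11}.
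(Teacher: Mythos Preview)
Your overall strategy---coercivity (Lemma~\ref{le24}) for boundedness, Lions' lemma (Lemma~\ref{le31}) to rule out vanishing, translation by a lattice vector to produce a nontrivial weak limit, then a Br\'ezis--Lieb splitting via Lemmas~\ref{le32}--\ref{le33} and a case analysis on the sign of $G(u,v)$---is exactly the paper's. Your treatment of the case $G(u,v)\le0$ is essentially the paper's Case~1, once it is phrased through the nonnegative functional $J:=I-\tfrac{1}{N+\alpha+\beta}G$ rather than through $I$ itself (note that your chain ``$I(u_{\bar t},v_{\bar t})\le I(u,v)$'' is false as written, since $\bar t$ is the \emph{maximum} of $t\mapsto I(u_t,v_t)$; it is the corresponding inequality for $J$ that holds). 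Incidentally, there is no need for a separate ``$G_\infty$'': since $A,B$ are periodic and you have translated by lattice vectors, the same $G$ governs both the weak limit and the remainder.

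There is, however, a genuine gap in your second alternative. When $G(u,v)>0$, the splitting $0=G(\tilde u_n,\tilde v_n)\ge G(u,v)+G(w_n,z_n)+o(1)$ forces $\limsup_n G(w_n,z_n)<0$, and the Sobolev/interpolation bound from Lemma~\ref{le23} then gives $\int (w_n^2|\nabla w_n|^2+z_n^2|\nabla z_n|^2)\ge c>0$; but nothing in your argument makes these quasilinear terms tend to~$0$. The assertion that they do ``from $G(\tilde u_n,\tilde v_n)=0$, $G(u,v)\ge0$, and the splitting'' is unjustified: that information bounds $G(w_n,z_n)$ from above, not the individual positive terms inside it, so no contradiction of the ``vanishing'' type arises. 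The paper closes this case by a different mechanism. Since $G(w_n,z_n)<0$ for large $n$, Lemma~\ref{le22} produces $t_n\in(0,1)$ with $\bigl((w_n)_{t_n},(z_n)_{t_n}\bigr)\in\mathcal M$, and one checks $\limsup_n t_n<1$. Applying the Br\'ezis--Lieb decomposition to the nonnegative functional $J$ (here the one-sided inequality of Lemma~\ref{le33} points in the right direction) yields
\[
m+o(1)=J(\tilde u_n,\tilde v_n)\ \ge\ J(u,v)+J(w_n,z_n)\ >\ J(u,v)+J\bigl((w_n)_{t_n},(z_n)_{t_n}\bigr)\ \ge\ 0+m,
\]
the strict inequality coming from $t_n<1$ and the strict scaling of every term of $J$, and the final inequality from $J(u,v)\ge0$ together with $\bigl((w_n)_{t_n},(z_n)_{t_n}\bigr)\in\mathcal M$. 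This scaling of the remainder onto $\mathcal M$ is the missing idea in your sketch.

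A minor point: the positivity discussion at the end (replacing $(u,v)$ by $(|u|,|v|)$ and invoking the maximum principle) is not part of Lemma~\ref{le34}; in the paper it belongs to the proof of Theorem~\ref{th11}, after one has shown that the minimizer is actually a critical point of $I$.
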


\begin{proof} Let $(u_{n},v_{n})\in\mathcal{M}$ so that $I(u_{n},v_{n})\rightarrow m$. By Lemma \ref{le24}, $\{u_{n}\}$, $\{v_{n}\}$, $\{u_{n}^{2}\}$  and $\{v_{n}^{2}\}$ are bounded in $H^{1}(\mathbb R^{N}).$ Then, there exist subsequence of $\{u_{n}\}$, $\{v_{n}\}$ (still denoted by $\{u_{n}\}$, $\{v_{n}\}$) such that $u_{n}\rightharpoonup u$ and $u_{n}^{2}\rightharpoonup u^{2}$ in $H^{1}(\mathbb R^{N})$, $v_{n}\rightharpoonup v$ and $v_{n}^{2}\rightharpoonup v^{2}$ in $H^{1}(\mathbb R^{N})$.
This implies in particular that $\{u_{n}\}$ and $\{v_{n}\}$ are bounded in $L^{\alpha+\beta}(\mathbb R^{N})$, $\alpha+\beta\in(2,\frac{4N}{N-2})$. The proof consists of three steps.
\vskip4pt
{\bf Step 1.} $\int_{\mathbb R^{N}}|u_{n}|^{\alpha}|v_{n}|^{\beta}\not\rightarrow0$.
\vskip4pt
It follows from Lemma \ref{le24}  and
$$ \aligned I(u_{n},v_{n})&=\int_{\mathbb R^{N}}\bigg(\frac{1}{2}(|\nabla u_{n}|^{2}+|\nabla v_{n}|^{2}+A(x)u_{n}^{2}+B(x)v_{n}^{2}\\
&\ \ \ \  +u_{n}^{2}|\nabla u_{n}|^{2}+v_{n}^{2}|\nabla v_{n}|^{2})-\frac{2}{\alpha+\beta} |u_{n}|^{\alpha}|v_{n}|^{\beta}\bigg)\\
&\rightarrow m>0\endaligned$$
that $\|u_{n}\|_{H^{1}}+\|v_{n}\|_{H^{1}}+\|u_{n}^{2}\|_{H^{1}}+\|v_{n}^{2}\|_{H^{1}}\not\rightarrow0.$ By using Lemma \ref{le22}, for any $t > 1$,
$$\aligned m&\leftarrow I(u_{n},v_{n})\\
&\geq I((u_{n})_{t},(v_{n})_{t})\\
&=\frac{t^{N}}{2}\int_{\mathbb R^{N}}(|\nabla u_{n}|^{2}+|\nabla v_{n}|^{2})+\frac{t^{N+2}}{2}\int_{\mathbb R^{N}}(A(tx)u_{n}^{2}+B(tx)v_{n}^{2})\\
&\ \ \ \ +\frac{t^{N+2}}{2}\int_{\mathbb R^{N}}\left(u_{n}^{2}|\nabla u_{n}|^{2}+v_{n}^{2}|\nabla v_{n}|^{2}\right)-\frac{2t^{N+\alpha+\beta}}{\alpha+\beta}\int_{\mathbb R^{N}}|u_{n}|^{\alpha}|v_{n}|^{\beta}\\
&\geq \frac{t^{N}}{2}\int_{\mathbb R^{N}}(|\nabla u_{n}|^{2}+|\nabla v_{n}|^{2}+A_{0}u_{n}^{2} +B_{0}v_{n}^{2}+u_{n}^{2}|\nabla u_{n}|^{2}+v_{n}^{2}|\nabla v_{n}|^{2})\\
&\ \ \ \ -\frac{2t^{N+\alpha+\beta}}{\alpha+\beta}\int_{\mathbb R^{N}}|u_{n}|^{\alpha}|v_{n}|^{\beta}\\
&\geq \frac{t^{N}}{2}\delta-\frac{2t^{N+\alpha+\beta}}{\alpha+\beta}\int_{\mathbb R^{N}}|u_{n}|^{\alpha}|v_{n}|^{\beta},\endaligned$$
where $\delta$ is a fixed constant. It suffices to take $t > 1$ so that $ \frac{t^{N}\delta}{2}>2m$ to get a lower bound for $\int_{\mathbb R^{N}}|u_{n}|^{\alpha}|v_{n}|^{\beta}$.

Therefore, we may assume that
\begin{equation}\label{eq3.2}
\int_{\mathbb R^{N}}|u_{n}|^{\alpha}|v_{n}|^{\beta}\rightarrow D\in(0,\infty).
\end{equation}
\vskip4pt
{\bf Step 2.} Splitting by concentration-compactness.
\vskip4pt
By using (\ref{eq3.2}) and Lemma \ref{le31}, there exist $\delta>0$ and $\{x_{n}\}\subset \mathbb R^{N}$  such that
\begin{equation}\label{eq3.3}
\int_{B_{x_{n}}(1)}|u_{n}|^{\alpha+\beta}>\delta>0.
\end{equation}
Let $\eta_{R}(t)$ a smooth function defined on $[0,+\infty)$ such that
\vskip4pt
a) $\eta_{R}(t)=1$ for $0\leq t\leq R;$

b) $\eta_{R}(t)=0$ for $t\geq2R$;

c) $\eta'_{R}(t)\leq\frac{2}{R}.$
\vskip4pt
Define $ \theta_{n}(x)=\eta_{R}(|x-x_{n}|)u_{n}(x)$, $\lambda_{n}(x)=(1-\eta_{R}(|x-x_{n}|))u_{n}(x)$. Obviously, $ u_{n} = \theta_{n}+\lambda_{n} $. Observe that in particular
\begin{equation}\label{eq3.4}
\liminf_{n\rightarrow+\infty}\int_{B_{x_{n}}(R)}| \theta_{n}|^{\alpha+\beta}\geq\delta.
\end{equation}
\vskip4pt
{\bf Step 3.}  The infimum of  $I|_{\mathcal{M}}$ is achieved.
\vskip4pt
Since  $u_{n}\rightharpoonup u,$ $u_{n}^{2}\rightharpoonup u^{2}$ in $H^{1}(\mathbb R^{N})$. We take $x_{n}\in \tau_{1}\mathbb{Z}\times \cdots \tau_{N}\mathbb{Z}$, where $x_{n}$ is given in (\ref{eq3.3}), define $\omega_{n}=u_{n}(\cdot+x_{n}),\nu_{n}=v_{n}(\cdot+x_{n})$, then $\omega_{n}\rightharpoonup \omega, \omega_{n}^{2}\rightharpoonup \omega^{2}$ in $H^{1}(\mathbb R^{N})$. In this case, by $(A_{1})$ and $(B_{1})$,   $I(u_{n},v_{n})=I(\omega_{n},\nu_{n})$. Using (\ref{eq3.4}), we obtain
$$ \delta<\liminf_{n\rightarrow+\infty}\int_{\mathbb R^{N}}| \theta_{n}|^{\alpha+\beta}\leq\lim_{n\rightarrow+\infty}\int_{B_{0}(2R)}|\omega_{n}|^{\alpha+\beta}=\int_{B_{0}(2R)}|\omega|^{\alpha+\beta},$$
which implies $ \omega \neq 0$, and then $u\neq 0$. We claim that $(u,v)\in \mathcal{M}.$ Indeed, if $(u,v)\not\in \mathcal{M},$ we discuss two cases:
\vskip4pt
{\bf Case 1:} $G(u,v)<0$. By Lemma \ref{le22}, there exists $t\in(0,1)$ such that $(u _{t},v_{t})\in \mathcal{M},$ it follows from $(A_{2})$, $(B_{2})$, $(u_{n},v_{n})\in \mathcal{M}$ and Fatou's Lemma that
$$\aligned m&=\liminf_{n\rightarrow+\infty}\bigg(I( u_{n} ,v_{n} )-\frac{1}{N+\alpha+\beta}G(u_{n} ,v_{n})\bigg)\\
&=\frac{1}{2(N+\alpha+\beta)}\liminf_{n\rightarrow+\infty} \int_{\mathbb R^{N}}\bigg((\alpha+\beta)(|\nabla u_{n}|^{2}+ |\nabla v_{n}|^{2})+((\alpha+\beta-2)A(x)\\
&\ \ \ \ -\nabla A(x)\cdot x)u_{n}^{2}+((\alpha+\beta-2)B(x)-\nabla B(x)\cdot x)v_{n}^{2}\\
&\ \ \ \ +(\alpha+\beta-2)(u_{n}^{2}|\nabla u_{n}|^{2}+v_{n}^{2}|\nabla v_{n}|^{2})\bigg)\\
&\geq \frac{1}{2(N+\alpha+\beta)}\int_{\mathbb R^{N}}\bigg((\alpha+\beta)(|\nabla u|^{2}+ |\nabla v|^{2})+((\alpha+\beta-2)A(x)\\
&\ \ \ \ -\nabla A(x)\cdot x)u^{2}+((\alpha+\beta-2)B(x)-\nabla B(x)\cdot x)v^{2}\\
&\ \ \ \ +(\alpha+\beta-2)(u^{2}|\nabla u|^{2}+v^{2}|\nabla v|^{2})\bigg)\\
&>\frac{1}{2(N+\alpha+\beta)}\int_{\mathbb R^{N}}\bigg(t^{N}(\alpha+\beta)(|\nabla u|^{2}+ |\nabla v|^{2})+t^{N+2}(((\alpha+\beta-2)A(x)\\
&\ \ \ \ -\nabla A(x)\cdot x)u^{2}+((\alpha+\beta-2)B(x)-\nabla B(x)\cdot x)v^{2})\\
&\ \ \ \ +t^{N+\alpha+\beta}(\alpha+\beta-2)(u^{2}|\nabla u|^{2}+v^{2}|\nabla v|^{2})\bigg)\\
&=I(u_{t},v_{t})-\frac{1}{N+\alpha+\beta}G(u_{t} ,v_{t})\\
&\geq m, \endaligned$$
which is a contradiction.
\vskip4pt
{\bf Case 2:} $G(u,v)>0$. Set $\xi_{n}:=u_{n}-u,\gamma_{n}:=v_{n}-v$, by Lemma \ref{le32}, Lemma \ref{le33}, Br\'{e}zis-Lieb lemma \cite{bli}, $(A_{1})$ and $(B_{1})$, we may obtain
\begin{equation}\label{eq3.7}
G(u_{n},v_{n})\geq G(u,v)+G(\xi_{n},\gamma_{n})+o_{n}(1).
\end{equation}
Then $\limsup\limits_{n\rightarrow\infty}G(\xi_{n},\gamma_{n})<0$.  By Lemma \ref{le22}, there exists $t_{n}\in(0,1)$ such that $((\xi_{n})_{t_{n}}, (\gamma_{n})_{t_{n}})\in \mathcal{M}.$ Furthermore, one has that $\limsup\limits_{n\rightarrow\infty}t_{n}<1$, otherwise, along a subsequence, $t_{n}\rightarrow1$ and hence $G(\xi_{n},\gamma_{n})=G((\xi_{n})_{t_{n}}, (\gamma_{n})_{t_{n}})+o_{n}(1) = o_{n}(1)$, a contradiction. It follows from $(u_{n},v_{n})\in \mathcal{M} $, (\ref{eq3.1}), (\ref{eq3.7}), $(A_{2}) $ and $(B_{2})$  that
$$\aligned &\ m+o_{n}(1)\\
=&\ I( u_{n} ,v_{n} )-\frac{1}{N+\alpha+\beta}G(u_{n} ,v_{n})\\
=&\ \frac{1}{2(N+\alpha+\beta)}\int_{\mathbb R^{N}}\bigg((\alpha+\beta)(|\nabla u_{n}|^{2}+ |\nabla v_{n}|^{2})+((\alpha+\beta-2)A(x)\\
&\ \ \ \ -\nabla A(x)\cdot x)u_{n}^{2}+((\alpha+\beta-2)B(x)-\nabla B(x)\cdot x)v_{n}^{2}\\
&\ \ \ \ +(\alpha+\beta-2)(u_{n}^{2}|\nabla u_{n}|^{2}+v_{n}^{2}|\nabla v_{n}|^{2})\bigg)\\
\geq&\ \frac{\alpha+\beta}{2(N+\alpha+\beta)}\int_{\mathbb R^{N}}(|\nabla u |^{2}+|\nabla v |^{2}+|\nabla \xi_{n}|^{2}+|\nabla \gamma_{n}|^{2})\\
&\ +\frac{1}{2(N+\alpha+\beta)}\int_{\mathbb R^{N}}(((\alpha+\beta-2)A(x)-\nabla A(x)\cdot x)(u ^{2}\\
&\ +\xi_{n}^{2})+((\alpha+\beta-2)B(x)-\nabla B(x)\cdot x)(v ^{2}+\gamma_{n}^{2})) \\ &\ +\frac{\alpha+\beta-2}{2(N+\alpha+\beta)}\int_{\mathbb R^{N}}(u^{2}|\nabla u |^{2}+v ^{2}|\nabla v |^{2}+\xi_{n}^{2}|\nabla \xi_{n}|^{2}+\gamma_{n}^{2}|\nabla \gamma_{n}|^{2})\\
=&\ I( u ,v )-\frac{1}{N+\alpha+\beta}G(u,v)+I(\xi_{n},\gamma_{n})-\frac{1}{N+\alpha+\beta}G(\xi_{n}, \gamma_{n})\\
>&\ I( u ,v )-\frac{1}{N+\alpha+\beta}G(u ,v )+ I((\xi_{n}) _{t_n}, (\gamma_{n})_{t_n})\\
&\ -\frac{1}{N+\alpha+\beta}G((\xi_{n})_{t_n}, (\gamma_{n})_{t_n})\\
=&\ I((\xi_{n})_{t_n}, (\gamma_{n})_{t_n})+\frac{\alpha+\beta}{2(N+\alpha+\beta)}\int_{\mathbb R^{N}}(|\nabla u|^{2}+|\nabla v|^{2})\\
&\ +\frac{1}{2(N+\alpha+\beta)}\int_{\mathbb R^{N}}(((\alpha+\beta-2)A(x)-\nabla A(x)\cdot x)u^{2}\\
&\ +((\alpha+\beta-2)B(x)-\nabla B(x)\cdot x)v^{2})\\
&\  +\frac{\alpha+\beta-2}{2(N+\alpha+\beta)}\int_{\mathbb R^{N}}(u^{2}|\nabla u|^{2}+v^{2}|\nabla v|^{2})\\
\geq&\ m , \endaligned$$
which is also a contradiction.

Therefore, $(u,v)\in \mathcal{M},$ by using  Lebesgue dominated convergence theorem, Fatou's Lemma, $(A_{1}),(A_{2}),(B_{1})$, $(B_{2})$ and $(u_{n},v_{n})\in \mathcal{M} $, we may get
$$\aligned m&=I(u,v)-\frac{1}{N+\alpha+\beta}G(u,v)\\
&=\frac{1}{2(N+\alpha+\beta)}\int_{\mathbb R^{N}}\bigg((\alpha+\beta)(|\nabla u|^{2}+ |\nabla v|^{2})+((\alpha+\beta-2)A(x)\\
&\ \ \ \ -\nabla A(x)\cdot x)u^{2}+((\alpha+\beta-2)B(x)-\nabla B(x)\cdot x)v^{2}\\
&\ \ \ \ +(\alpha+\beta-2)(u^{2}|\nabla u|^{2}+v^{2}|\nabla v|^{2})\bigg)\\
&\leq \frac{1}{2(N+\alpha+\beta)}\liminf_{n\rightarrow+\infty} \int_{\mathbb R^{N}}\bigg((\alpha+\beta)(|\nabla u_{n}|^{2}+ |\nabla v_{n}|^{2})+((\alpha+\beta-2)A(x)\\
&\ \ \ \ -\nabla A(x)\cdot x)u_{n}^{2}+((\alpha+\beta-2)B(x)-\nabla B(x)\cdot x)v_{n}^{2}\\
&\ \ \ \ +(\alpha+\beta-2)(u_{n}^{2}|\nabla u_{n}|^{2}+v_{n}^{2}|\nabla v_{n}|^{2})\bigg)\\
&=\liminf_{n\rightarrow+\infty}\bigg(I( u_{n} ,v_{n} )-\frac{1}{N+\alpha+\beta}G(u_{n},v_{n})\bigg)\\
&=\liminf_{n\rightarrow+\infty}I( u_{n} ,v_{n} )\\
&=m,\endaligned$$ which implies that $\|(u_{n},v_{n})\|_{H}\rightarrow \|(u,v)\|_{H}$ and $I(u,v)=m$.
\vskip4pt
Having a minimum of  $I|_{\mathcal{M}}$, the fact that it is indeed a solution of  (\ref{eq11}), is based on a general idea used in  \cite[Lemma 2.5]{lww}.

\vskip12pt
\noindent{\bf Proof of Theorem \ref{th11}.}
\vskip4pt
Let $(\tilde{u},\tilde{v})\in \mathcal{M}$  be a minimizer of the functional $I|_{\mathcal{M}}$. Then from Lemma \ref{le22}, one has
$$I(\tilde{u},\tilde{v})=\inf_{(u,v)\in X }\max\limits_{t>0}I(u_{t},v_{t})=m.$$
Suppose by contradiction that $(\tilde{u},\tilde{v})$ is not a weak solution of (\ref{eq11}). Then, one can find $\phi_{1},\phi_{2}\in C_{0}^{\infty}(\mathbb R^{N})$  such that
$$\aligned &\ \langle I'(\tilde{u},\tilde{v}),(\phi_{1},\phi_{2})\rangle\\
=& \int_{\mathbb R^{N}}\bigg( \nabla \tilde{u}\nabla \phi_{1}+ \nabla \tilde{v}\nabla \phi_{2}+\nabla(\tilde{u}^{2})\nabla(\tilde{u}\phi_{1})+\nabla(\tilde{v}^{2})\nabla(\tilde{v}\phi_{2}) \\
&\ +A(x)\tilde{u}\phi_{1}+B(x)\tilde{v}\phi_{2}-\frac{2\alpha}{\alpha+\beta}|\tilde{u}|^{\alpha-2}\tilde{u}|\tilde{v}|^{\beta}\phi_{1}-\frac{2\beta}{\alpha+\beta}|\tilde{v}|^{\beta-2}\tilde{v}|\tilde{u}|^{\alpha}\phi_{2}\bigg)\\
<&-1.\endaligned$$
We choose small  $\varepsilon> 0$ such that
$$\langle I' (\tilde{u}_{t}+\sigma\phi_{1},\tilde{v}_{t}+\sigma\phi_{2}),(\phi_{1},\phi_{2}) \rangle\leq -\frac{1}{2},\ \ |t-1|,|\sigma|\leq\varepsilon, $$
and introduce a cut-off function $0\leq \zeta\leq1 $ satisfying $\zeta(t)=1 $ for $|t-1|\leq\frac{\varepsilon}{2}$ and $\zeta(t)=0 $ for $|t-1|\geq\varepsilon. $
For $ t\geq0$, we define
$$\gamma_{1}(t):=\left\{\aligned &\tilde{u}_{t}, &\hbox{if}\ |t-1|\geq\varepsilon,\\
&\tilde{u}_{t}+\varepsilon\zeta(t)\phi_{1}, &\hbox{if}\ |t-1|<\varepsilon,\endaligned\right. $$
$$\gamma_{2}(t):=\left\{\aligned &\tilde{v}_{t}, &\hbox{if}\ |t-1|\geq\varepsilon,\\
&\tilde{v}_{t}+\varepsilon\zeta(t)\phi_{2}, &\hbox{if}\ |t-1|<\varepsilon.\endaligned\right. $$
Then $ \gamma_{i}(t)$ is continuous curve in the metric space $ (H,d)$  and, eventually choosing a smaller $ \varepsilon$, we get that $d_{H}(\gamma_{i}(t),0)>0,$ for $|t-1|<\varepsilon$, $i=1,2$.

Next we claim that $\sup\limits_{t\geq0}I(\gamma_{1}(t),\gamma_{2}(t))<m.$  Indeed, if $|t-1|\geq\varepsilon $, then $$I(\gamma_{1}(t),\gamma_{2}(t))=I(\tilde{u}_{t},\tilde{v}_{t})<I(\tilde{u},\tilde{v})=m.$$
If $ |t-1|<\varepsilon$, by using the mean value theorem to the $C^{1}$  map $[0,\varepsilon]\ni \sigma\mapsto I(\tilde{u}_{t}+\sigma\zeta(t)\phi_{1},\tilde{v}_{t}+\sigma\zeta(t)\phi_{2})\in\mathbb R$, we find, for a suitable $\bar{\sigma}\in(0,\varepsilon)$,
$$\aligned &\ I(\tilde{u}_{t}+\sigma\zeta(t)\phi_{1},\tilde{v}_{t}+\sigma\zeta(t)\phi_{2})\\
=&\ I(\tilde{u}_{t},\tilde{v}_{t})+\langle I'(\tilde{u}_{t}+\bar{\sigma}\zeta(t)\phi_{1},\tilde{v}_{t}+\bar{\sigma}\zeta(t)\phi_{2}),(\zeta(t)\phi_{1},\zeta(t)\phi_{2})\rangle\\
\leq&\ I(\tilde{u}_{t},\tilde{v}_{t})-\frac{1}{2}\zeta(t)\\
<&\ m.\endaligned$$
To conclude observe that
$$G(\gamma_{1}(1-\varepsilon),\gamma_{2}(1-\varepsilon))>0,\ \ G(\gamma_{1}(1+\varepsilon),\gamma_{2}(1+\varepsilon))<0.$$
By using the continuity of the map $t\mapsto G(\gamma_{1}(t),\gamma_{2}(t))$  there exists $t_{0}\in(1-\varepsilon,1+\varepsilon) $  such that $G(\gamma_{1}(t_{0}),\gamma_{2}(t_{0}))=0$. Namely,
$$ (\gamma_{1}(t_{0}),\gamma_{2}(t_{0}))=(\tilde{u}_{t_{0}}+\varepsilon\zeta(t_{0})\phi_{1},\tilde{v}_{t_{0}}+\varepsilon\zeta(t_{0})\phi_{2})\in \mathcal{M},$$  and $I(\gamma_{1}(t_{0}),\gamma_{2}(t_{0}))<m$, this is a contradiction.

We claim $\tilde{u}\neq 0,\tilde{v}\neq0$. Indeed, if $\tilde{v}=0$, then the first equation of (\ref{eq11}) yields that $\tilde{u}=0$, then $ (\tilde{u},\tilde{v})=(0,0)$, this is impossible by the proof of {\bf Step 3} in Lemma \ref{le32}. Since any solution of (\ref{eq11}) belongs to  $\mathcal{M}$, the minimizer is a ground states. In addition, $(|\tilde{u}|,|\tilde{v}|)\in \mathcal{M}$ is also a minimizer, and hence a solution.  By the classical maximum principle, we get that $\tilde{u},\tilde{v}>0$.
\end{proof}

\bigskip

ACKNOWLEDGMENTS\\

The authors thank anonymous referees whose important comments helped them to improve their work. This work is supported by National Natural Science Foundation of China (Nos. 11871152; 11671085).\\

DATA AVAILABILITY\\

The data that support the findings of this study are available within the article.\\


\begin{thebibliography}{99}
\small

\bibitem{acm} M. Alves, P.  Carri\~{a}o, O.  Miyagaki, {\it Soliton solutions for a class of quasilinear elliptic equations on $\mathbb R$}, Adv. Nonlinear Stud. 7 (2007) 579-598.

\bibitem{bw} T. Bartsch, Z. Wang, {\it Existence and multiplicity results for superlinear elliptic problems on $\mathbb R^{N}$}, Commun. Part Diff Eq. 20 (1995) 1725-1741.

\bibitem{bmmlb} H. Brandi, C. Manus, G. Mainfray, T. Lehner, G. Bonnaud, {\it Relativistic and ponderomotive self-focusing of a laser beam in a radially inhomogeneous plasma. I: paraxial approximation}, Physics of Fluids B, 5 (1993) 3539-3550.

\bibitem{bli} H. Brezis, E. Lieb, {\it A relation betweenn pointwise convergence of functions and convergence offunctionals}, Proc. AMS. 88 (1983) 486-490.


\bibitem{bl} L. Br\"{u}ll, H. Lange, {\it  Solitary waves for quasilinear Schr\"{o}dinger equations}, Expos. Math. 4 (1986)  278-88.

\bibitem{cs} X. Chen, R. Sudan, {\it Necessary and sufficient conditions for self-focusing of short ultraintense laser pulse in underdense plasma}, Physical Review Letters, 70 (1993) 2082-2085.

\bibitem{gt} Y. Guo, Z. Tang, {\it Ground state solutions for quasilinear Schr\"{o}dinger systems}, J. Math. Anal. Appl. 389 (2012) 322-339.

\bibitem{hpg} P. Han, {\it The effect of the domain topology on the number of positive solutions of an elliptic system involving critical Sobolev exponents}, Houston J. Math. 32 (2006) 1241-1257.

\bibitem{k} S. Kurihura, {\it Large-amplitude quasi-solitons in superfluid films}, J. Phys. Soc. Japan 50 (1981) 3262-3267.

\bibitem{lss} E. Laedke, K. Spatschek, L. Stenflo, {\it Evolution theorem for a class of perturbed envelope soliton solutions}, J. Math. Phys. 24 (1983) 2764-2769.

\bibitem{ltz} H. Lange, B. Toomire, P. Zweifel, {\it Time-dependent dissipation in nonlinear Schr\"{o}dinger systems}, J. Math. Phys. 36 (1995) 1274-1283.

\bibitem{l} P. Lions, {\it  The concentration-compactness principle in the calculus of variations. The locally compact case. Part 1-2}, Ann. Inst. H. Poincar\'{e}. 1 (1984) 109-145 and 223-283.

\bibitem{ll} H. Liu, Z. Liu, {\it Ground states of a nonlinear Schr\"{o}dinger system with nonconstant potentials}, Sci. China Math. 58 (2015) 257-278.

\bibitem{lw} J. Liu, Z. Wang, {\it Soliton solutions for quasilinear Schr\"{o}dinger equations}, I, Proc. AMS. 131 (2003) 441-448.

\bibitem{lw2} J. Liu, Z. Wang, {\it Soliton solutions for quasilinear Schr\"{o}dinger equations}, II, J. Differ. Equ. 187 (2003) 473-493.

\bibitem{lww} J. Liu, Y. Wang, Z. Wang, {\it Solutions for quasilinear Schr\"{o}dinger equations via the Nehari method}, Commun. Part Diff Eq.  29 (2004) 879-901.

\bibitem{mf} V. Makhankov, V. Fedyanin, {\it Non-linear effects in quasi-one-dimensional models of condensed matter theory}, Phys. Rep. 104 (1984) 1-86.

\bibitem{m} A. Moameni, {\it Existence of soliton solutions for a quasilinear Schr\"{o}dinger equation involving critical exponent in $\mathbb R^{N}$}, J. Differ. Equ. 229 (2006) 570-587.

\bibitem{ma} A. Moameni, {\it On a class of periodic quasilinear Schr\"{o}dinger equations involving critical growth in $\mathbb R^{2}$}, J. Math. Anal. Appl. 334 (2007) 775-786.

\bibitem{mj} V. Moroz, J. Schaftingen, {\it Ground states of nonlinear Choquard equations: Existence, qualitative properties and decay asymptotics}, J. Funct. Anal. 265 (2013) 153-184.

\bibitem{psw} M. Poppenberg, K. Schmitt, Z. Wang, {\it On the existence of soliton solutions to quasilinear Schr\"{o}dinger equations}, Calc. Var. PDEs. 14 (2002) 329-344.

\bibitem{r} B. Ritchie, {\it Relativistic self-focusing and channel formation in laser-plasma interactions}, Phys. Rev. E.  50 (1994) 687-689.

\bibitem{rs} D. Ruiz, G. Siciliano, {\it Existence of ground states for a modified nonlinear Schr\"{o}dinger equation}, Nonlinearity, 23 (2010) 1221-1233.

\bibitem{0s} M. Schott, {\it Station\"{a}re L\"{o}sungen quasilinearer Schr\"{o}dinger-Gleichungen}, Diploma Thesis Universit\`{e}t K\"{o}ln. 2002.

\bibitem{s} U. Severo, {\it Symmetric and nonsymmetric solutions for a class of quasilinear Schr\"{o}dinger equations}, Adv. Nonlinear Stud. 8 (2008) 375-389.

\bibitem{su} U. Severo, {\it Existence of weak solutions for quasilinear elliptic equations involving the p-Laplacian}, Electron. J. Differential Equations, 56 (2008) 1-16.

\bibitem{wz} Y. Wang, W. Zou, {\it Bound states to critical quasilinear Schr\"{o}dinger equations}, Nonlinear Differential Equations Appl. 19 (2012) 19-47.

\bibitem{ww} K. Wu, X. Wu, {\it Radial solutions for quasilinear Schr\"{o}dinger equations without 4-superlinear condition}, Applied Mathematics Letters, 76  (2018) 53-59.

\bibitem{zxz} H. Zhang, J. Xu, F. Zhang, {\it  Existence of positive ground states for some nonlinear Schr\"{o}dinger systems}, Boundary Value Problems, 2 (2013) 1-16.

\end{thebibliography}
\end{document}